\documentclass[12pt]{amsart}
\usepackage{fullpage, amsmath, amsthm,amsfonts,amssymb,stmaryrd, mathrsfs}
\usepackage{hyperref}

\usepackage[pdftex]{color}
\usepackage[all]{xy}
\usepackage{caption}
\usepackage{graphicx}

\numberwithin{equation}{subsection}
\newtheorem{theorem}[subsection]{Theorem}

\newtheorem{corollary}[subsection]{Corollary}
\newtheorem{lemma}[subsection]{Lemma}
\newtheorem{proposition}[subsection]{Proposition}

\theoremstyle{definition}

\newtheorem{definition}[subsection]{Definition}
\newtheorem{example}[subsection]{Example}

\newtheorem{notation}[subsection]{Notation}
\newtheorem{question}[subsection]{Question}
\newtheorem{remark}[subsection]{Remark}

\def\calC{\mathcal{C}}

\def\calW{\mathcal{W}}

\def\AAA{\mathbb{A}}

\def\CC{\mathbb{C}}
\def\FF{\mathbb{F}}
\def\GG{\mathbb{G}}

\def\NN{\mathbb{N}}

\def\QQ{\mathbb{Q}}

\def\ZZ{\mathbb{Z}}

\DeclareMathOperator{\Gal}{Gal}

\newcommand{\Qp}{\QQ_p}
\newcommand{\rig}{\mathrm{rig}}

\newcommand{\wt}{\mathrm{wt}}

\newcommand{\Zp}{\ZZ_p}

\newcommand{\Tr}{\mathrm{Tr}}

\begin{document}

\title{Newton Slopes for Artin-Schreier-Witt  Towers}
\author{Christopher Davis}
\address{Christopher Davis, University of California, Irvine, Department of
Mathematics, 340 Rowland Hall, Irvine, CA 92697}
\email{davis@math.uci.edu}
\author{Daqing Wan}
\address{Daqing Wan, University of California, Irvine, Department of
Mathematics, 340 Rowland Hall, Irvine, CA 92697}
\email{dwan@math.uci.edu}
\author{Liang Xiao}
\address{Liang Xiao, University of Connecticut, Storrs, Department of
Mathematics, 196 Auditorium Road, Unit 3009, Storrs, CT 06269}
\email{liang.xiao@uconn.edu}

\date{\today}

\begin{abstract}
We fix a monic polynomial $f(x) \in \FF_q[x]$ over a finite field and consider the Artin-Schreier-Witt tower defined by $f(x)$; this is a tower of curves $\cdots \to C_m \to C_{m-1} \to \cdots \to C_0 =\AAA^1$, with total Galois group $\ZZ_p$.
We study the Newton slopes of zeta functions of this tower of curves. This reduces to the study of the Newton slopes of L-functions associated to characters of the Galois group of this tower.  We prove that, when the conductor of the character is large enough, the Newton slopes of the L-function form arithmetic progressions which are independent of the conductor of the character.  As a corollary, we obtain a result on the behavior of the slopes of the eigencurve associated to the Artin-Schreier-Witt tower, analogous to the result of Buzzard and Kilford.
\end{abstract}

\subjclass[2010]{11T23 (primary), 11L07 11F33 13F35 (secondary).}
\keywords{Artin-Schreier-Witt towers, $T$-adic exponential sums, Slopes of Newton polygon, $T$-adic Newton polygon for Artin-Schreier-Witt towers, eigencurves, Riemann Hypothesis for $T$-adic L-functions}
\maketitle

\setcounter{tocdepth}{1}
\tableofcontents

\section{Introduction}

We fix a prime number $p$.
Let $\FF_q$ be a finite extension of $\FF_p$ of degree $a$ so that $q =p ^a$.
For an element $b \in \overline{\FF}_p$, let $\hat b$ denote its Teichm\"uller lift in $\ZZ_p^{\mathrm{ur}}$.
We fix a monic polynomial $ f(x) = x^d +  b_{d-1}x^{d-1} + \cdots +  b_0 \in \FF_q[x]$ whose degree $d$ is not divisible by $p$. Set $b_d := 1$. Let $\hat f(x)$ denote the polynomial $ x^d + \hat b_{d-1}x^{d-1} + \cdots + \hat b_0 \in \ZZ_q[x]$.
The \emph{Artin-Schreier-Witt} tower associated to $f(x)$ is the sequence of curves $C_m$ over $\FF_q$ defined by the following equations:
\[
C_m: \quad \underline y_m^F - \underline y_m =  \sum_{i=0}^d(b_ix^i, 0, 0,\dots)
\]
where $\underline y_m = (y_m^{(1)}, y_m^{(2)}, \dots)$ are viewed as Witt vectors of length $m$, and $\bullet^F$ means raising each Witt coordinate to the $p$th power.
In explicit terms, this means that $C_1$ is the usual Artin-Schreier curve given by $y^p - y = f(x)$, and $C_2$ is the curve above $C_1$ given by an additional equation (over $\FF_q$)
\[
z^p - z + \frac{y^{p^2}-y^p - (y^p - y)^p}{p} =  \frac{\hat f^\sigma(x^p) - \hat f(x)^p}{p},
\]
where $\hat f^\sigma(x) : = x^d + \hat b_{d-1}^p x^{d-1} + \dots +\hat b_0^p$.

It is clear that the Artin-Schreier-Witt tower is a tower of smooth affine curves $\cdots \to C_m \to C_{m-1} \to \cdots \to C_0 :=\AAA^1_{\FF_q}$, forming a tower of Galois covers of $\AAA^1$ with total Galois group $\ZZ_p$. This tower is totally ramified at $\infty$. Thus, each curve $C_m$ has only one point at $\infty$, 
which is $\FF_q$-rational  and smooth.  It is well known that the zeta function of the affine curve $C_m$ is 
\[
Z(C_m,s) = \exp \left(\sum_{k\geq 1} \frac{s^k}{k} \cdot \# C_m(\FF_{q^k}) \right) =\frac{P(C_m,s)}{1-qs},  
\]
where $P(C_m, s) \in 1+s\ZZ[s]$ is a polynomial of degree $2g(C_m)$, pure of $q$-weight $1$, and $g(C_m)$ denotes the genus of $C_m$.   

A natural interesting problem, in the spirit of Iwasawa theory,  is to understand the $q$-adic Newton slopes of this sequence $P(C_m,s)$ of polynomials,  especially their 
stable properties as $m\rightarrow \infty$. This seems to be a difficult problem for a general tower of curves, and in fact it is not clear if one should expect any 
stable property for the $q$-adic Newton slopes. For the Artin-Schreier-Witt tower of curves 
considered in this paper, we discover a surprisingly strong stability property for the $q$-adic Newton slopes. 

Our problem for the zeta functions easily reduces to the corresponding problem for the L-functions attached to 
the tower of curves. 
In this paper, all characters are assumed to be continuous.
For a finite character $\chi: \ZZ_p \to \CC_p^\times$, we put $\pi_\chi = \chi(1)-1$.
Let $m_\chi$ be the nonnegative integer so that the image of $\chi$ has cardinality $p^{m_\chi}$; we call $p^{m_\chi}$ the \emph{conductor} of $\chi$. Then, when $\chi$ is nontrivial,  $\QQ_p(\pi_\chi)$ is a finite totally ramified (cyclotomic) extension of $\Qp$ of degree $(p-1)p^{m_\chi - 1}$, and $\pi_\chi$ is a uniformizer.
Such a character $\chi$ defines an L-function $L(\chi, s)$ over $\AAA^1_{\FF_q}$ given by
\[
L(\chi, s) = L_f(\chi, s) = \prod_{x \in |\AAA^1|}
\frac{1}{1-\chi\bigg(\Tr_{\QQ_{q^{\deg(x)}}/\QQ_p}\left(\hat f(\hat x)\right)\bigg)s^{\deg(x)}}
\in 1+ s\ZZ_p[\pi_\chi][\![ s]\!],
\]
where $ |\AAA^1|$ denotes the set of closed points of $\AAA^1_{\FF_q}$ and $\hat{x}$ denotes the Teichm\"uller lift of  any of the 
conjugate geometric points in the closed point $x$. 
The L-function $L(\chi, s)$ is known to be a polynomial of degree $p^{m_\chi-1} d-1$ if $\chi$ is non-trivial, see Theorem 1.3 in 
\cite{liu-wei}.   
The L-function of the trivial character is given by $L(1,s) = 1/(1-qs)$,  which is just the zeta function of $\AAA^1_{\FF_q}$. 
The zeta functions of the curves in the Galois tower admit the following decompositions:
\[
Z(C_m,s) = \prod_{\chi, 0\leq m_{\chi}\leq m} L(\chi, s), \quad \ P(C_m,s) = \prod_{\chi, 1\leq m_{\chi}\leq m} L(\chi, s). \]
Hence the study of the polynomial $P(C_m,s)$ reduces to the study of $L(\chi,s)$ for various nontrivial finite characters $\chi$.

In this paper we study certain periodicity behavior of the Newton polygon of the L-function $L(\chi,s)$.  We first explain our conventions on Newton polygons.

\begin{notation}
Let $R$ be a ring with valuation and $\varpi$ an element with positive valuation.
Let $v_\varpi(\cdot)$ denote the \emph{$\varpi$-adic valuation} on $R$ normalized so that $v_\varpi(\varpi) =1$.
Then the \emph{$\varpi$-adic Newton polygon} of a polynomial or a power series $1+ a_1 s + a_2 s^2 + \cdots$ with coefficients in $R$, is the lower convex hull of the set of points $(i, v_\varpi(a_i))$ for $i=0, 1, \dots$ (put $a_0=1$).  The \emph{slopes} of such a polygon are the slopes of each of its width~1 segments, counted with multiplicity and put in increasing order.
Clearly, changing the choice of $\varpi$ in $R$ results in rescaling the slopes.
\end{notation}

If $\chi_1$ and $\chi_2$ are two characters with the same conductor $p^{m_{\chi_1}}=p^{m_{\chi_2}}=p^m>1$, then their L-functions $L(\chi_1,s)$ 
and $L(\chi_2,s)$ are Galois conjugate polynomials over $\QQ(\zeta_{p^m})$ and hence have the same $p$-adic Newton polygon.  
Our main result is the following

\begin{theorem}
\label{T:main theorem}
Let $m_0$ be the minimal positive integer such that $p^{m_0-1} \geq \frac{a(d-1)^2}{8d}$ and let $0<\alpha_1, \dots, \alpha_{dp^{m_0-1}-1}<1$ denote the slopes of the $q$-adic Newton polygon of $L(\chi_0,s)$ for  a finite character $\chi_0: \ZZ_p \to \CC_p^\times$ with $m_{\chi_0} = m_0$. 
Then, for every finite character $\chi:\ZZ_p \to \CC_p^\times$ with $m_\chi \geq m_0$, the $q$-adic Newton polygon of $L(\chi,s)$ has slopes

\[
\bigcup_{i=0}^{p^{m_{\chi}- m_0}-1} \big\{\frac{i}{p^{m_{\chi}-m_0}}, \frac{\alpha_1+i}{p^{m_{\chi}-m_0}}, 
\dots, \frac{\alpha_{dp^{m_0-1}-1}+i}{p^{m_{\chi}-m_0}}\big\} - \{0\},
\]
In other words, the $q$-adic Newton slopes of $L(\chi, s)$ form a union of $dp^{m_0-1}$ arithmetic progressions, with increment 
$p^{m_0-m_{\chi}}$.
\end{theorem}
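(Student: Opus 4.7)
The plan is to interpolate the family $\{L(\chi,s)\}_\chi$ by a single $T$-adic $L$-function and reduce the theorem to a structural statement about its $T$-adic Newton polygon. First I would set up, via Dwork's $T$-adic trace formula, a compact $\sigma^{-1}$-semilinear Dwork operator $\psi_T$ acting on a $p$-adic Banach module $B$ over $\ZZ_p[\![T]\!]$ whose Fredholm determinant $C_f(T,s):=\det(I-s\psi_T)$ satisfies an identity of the form
\[
L(\chi,s)\cdot(1-qs)= C_f(\pi_\chi,s)
\]
for every finite character $\chi:\ZZ_p\to\CC_p^\times$. Taking a basis $\{\gamma(T)^n x^n\}_{n\geq 0}$ with $\gamma(T)$ an Artin-Hasse-type series in $T$, the matrix entries of $\psi_T$ are explicit in the Teichm\"uller lifts $\hat b_0,\dots,\hat b_{d-1}$ and the $T$-adic Artin-Hasse coefficients; crucially the $T$-adic Newton polygon of $C_f(T,s)$, normalized by $v_T(T)=1$, depends only on $f$ and $a$, and provides a uniform lower bound for the $\pi_\chi$-adic Newton polygons of all specializations.

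Let $\beta_1\leq\beta_2\leq\cdots$ denote the slopes of this $T$-adic Newton polygon. The core of the proof is to establish
\[
\beta_{n+dp^{m_0-1}}\ =\ \beta_n+a(p-1)p^{m_0-1}\qquad\text{for all }n\geq 1,
\]
so that the entire $T$-adic slope sequence is determined by its first $dp^{m_0-1}$ terms. I would prove this in two steps. First, a $T$-adic Adolphson-Sperber Hodge lower bound, showing that the $T$-adic Newton polygon lies above an explicit Hodge polygon $H$ whose $(n+dp^{m_0-1})$-th vertex is exactly $a(p-1)p^{m_0-1}$ higher than its $n$-th vertex. Second, and this is the main obstacle, a matching upper bound at the vertex $n=dp^{m_0-1}$: the hypothesis $p^{m_0-1}\geq \tfrac{a(d-1)^2}{8d}$ is chosen precisely so that an explicit ``Hasse-type determinant'' extracted from the leading $T$-power of the $(dp^{m_0-1})$-th Fredholm coefficient is nonzero, forcing Newton $=$ Hodge at that vertex. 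The Witt-vector/self-similar structure of the Artin-Schreier-Witt tower then propagates equality to every subsequent multiple $kdp^{m_0-1}$, yielding the displayed recursion.

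Finally, I would convert the recursion into the stated slopes of $L(\chi,s)$. Defining the normalized slopes $\gamma_n:=\beta_n/\bigl(a(p-1)p^{m_0-1}\bigr)$, the recursion becomes $\gamma_{n+dp^{m_0-1}}=\gamma_n+1$; evaluating at $T=\pi_{\chi_0}$ and using $v_p(\pi_{\chi_0})=1/((p-1)p^{m_0-1})$ together with $v_q=v_p/a$ identifies $\gamma_1,\dots,\gamma_{dp^{m_0-1}}$ with $0,\alpha_1,\dots,\alpha_{dp^{m_0-1}-1}$ (the $q$-adic slopes of $L(\chi_0,s)$ together with the extra $0$ slope coming from the factor $(1-qs)$). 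For a general $\chi$ with $m_\chi\geq m_0$, the $n$-th $q$-adic slope of $L(\chi,s)$ equals $\gamma_n/p^{m_\chi-m_0}$ for $n=1,\dots,dp^{m_\chi-1}$; partitioning this index range into $p^{m_\chi-m_0}$ consecutive blocks of length $dp^{m_0-1}$ and applying the recursion within each block produces exactly the union of arithmetic progressions in the statement, while removing the single $\gamma_n=0$ slope accounts for the discrepancy between $\deg L(\chi,s)=dp^{m_\chi-1}-1$ and the total number of $\gamma_n$'s in that range.
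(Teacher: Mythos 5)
Your proposal correctly identifies the $T$-adic characteristic function $C^*(T,s)$ as the right interpolation object (though the stated identity $L(\chi,s)(1-qs) = C_f(\pi_\chi,s)$ is wrong: $C^*(T,s)$ is $T$-adically entire, not a polynomial, and the actual relation is $L^*(\chi,s) = C^*(\chi,s)/C^*(\chi,qs)$ with $C^*(\chi,s) = \prod_{j\geq 0}L^*(\chi,q^js)$). The real difficulty is in your core step: you propose to prove the recursion $\beta_{n+dp^{m_0-1}} = \beta_n + a(p-1)p^{m_0-1}$ for the slopes $\beta_n$ of the \emph{$T$-adic} Newton polygon of $C^*(T,s)$, and then to identify those slopes, after evaluating at $T = \pi_{\chi_0}$, with the $q$-adic slopes of $L(\chi_0,s)$. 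But the $T$-adic Newton polygon is only a \emph{lower bound} for the $\pi_\chi^{a(p-1)}$-adic polygons of the specializations; specialization does not preserve Newton polygons, because the leading $T$-coefficient $a_{i,\lambda_i}$ may be divisible by $p$, in which case $v_{\pi_\chi}(a_i(\pi_\chi))$ is strictly larger than $\lambda_i v_{\pi_\chi}(\pi_\chi)$. You never address how to pass from the $T$-adic polygon back to the specialized polygons, and indeed the periodicity of the $T$-adic Newton polygon itself is explicitly noted in the paper's closing remark of Section~3 as an open problem in general (known only when $p\equiv 1 \bmod d$). Proving your recursion would thus require settling an open question.

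You also misplace the role of the hypothesis $p^{m_0-1}\geq \tfrac{a(d-1)^2}{8d}$. In the paper's argument, Newton $=$ Hodge for $C^*(T,s)$ at the vertices $nd$ and $nd+1$ holds for \emph{all} $n$, without any constraint on $m_0$: this follows from sandwiching the $q$-adic polygon of $C^*(\chi_1,s)$ (for $\chi_1$ of conductor $p$) between the Hodge lower bound and the upper bound coming from the known degree $d-1$ and weight of $L(\chi_1,s)$, and then reading off that the leading coefficients $a_{k,\lambda_k}$ for $k\equiv 0,1\pmod d$ are $p$-adic units. The constraint on $m_0$ enters elsewhere, and this is the genuinely missing idea: one shows that the vertical gap between the Hodge lower bound and the (now-known) sharp upper bound is at most $\tfrac{(d-1)^2}{8d}$, which is $\leq v_{\pi_\chi^{a(p-1)}}(p) = p^{m_\chi-1}/a$ precisely when $m_\chi\geq m_0$. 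This forces every term $a_{i,j}\pi_\chi^j$ with $a_{i,j}$ a non-unit to land on or above the upper bound polygon, so the $\pi_\chi^{a(p-1)}$-adic polygon of $C^*(\chi,s)$ is determined entirely by the $\chi$-independent quantities $\lambda'_i := \min\{j : a_{i,j}\in\ZZ_p^\times\}$. The independence of the specialized polygon in $\chi$ is what gives the theorem; the period-$dp^{m_0-1}$ progression structure then falls out automatically from the identity $C^*(\chi_0,s)=\prod_{j\geq 0}L^*(\chi_0,q^js)$ together with rescaling from $\pi_{\chi_0}^{a(p-1)}$ to $\pi_\chi^{a(p-1)}$ to $q$, not from any recursion on $T$-adic slopes.
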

In short, this theorem says that the Newton slopes of the L-function for an Artin-Schreier-Witt tower enjoy a certain periodicity property.
From this, one can easily deduce a nice description for the Newton slopes of the zeta function $P(C_m,s)$, as $m \to \infty$. 
For an integer $m\geq 1$, write 
$$P(m,s) =\prod_{m_{\chi}=m} L(\chi, s) \in 1+s\ZZ[s],$$
which is a polynomial of degree $(p-1)p^{m-1}(p^{m-1}d-1)$. Then, 
$$P(C_m, s) =\prod_{k=1}^m P(k, s) \in 1+s\ZZ[s]$$
is a polynomial of degree 
$$2g(C_m) = \sum_{k=1}^m (p-1)p^{k-1}(p^{k-1}d-1) = (p-1)\left(d\frac{p^{2m}-1}{p^2-1} -\frac{p^m-1}{p-1}\right).$$
With the notations of the previous theorem, we deduce 

\begin{corollary} \label{C:Newton slopes cor} For each integer $m\geq m_0$, the $q$-adic Newton polygon of $P(m, s)$ has slopes 
\[
\bigcup_{i=0}^{p^{m- m_0}-1} \big\{\frac{i}{p^{m-m_0}}, \frac{\alpha_1+i}{p^{m-m_0}}, \dots, \frac{\alpha_{dp^{m_0-1}-1}+i}{p^{m-m_0}}\big\} - \{0\},
\]
each counted with multiplicity $(p-1)p^{m-1}$.
\end{corollary}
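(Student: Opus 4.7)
The plan is to deduce Corollary \ref{C:Newton slopes cor} directly from Theorem \ref{T:main theorem}, using the factorization $P(m,s) = \prod_{m_\chi = m} L(\chi, s)$ together with the Galois-conjugacy observation recorded just before Theorem \ref{T:main theorem}. The substantive content has already been absorbed into Theorem \ref{T:main theorem}; what remains is essentially a counting argument and the behavior of Newton polygons under products.

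First I would count the characters of conductor exactly $p^m$: the continuous characters $\chi: \ZZ_p \to \CC_p^\times$ with $m_\chi = m$ are precisely the primitive characters of order $p^m$, and there are $(p-1)p^{m-1}$ of them. As already noted in the paragraph preceding Theorem \ref{T:main theorem}, any two such characters $\chi_1, \chi_2$ are Galois conjugate (the Galois group of $\QQ(\zeta_{p^m})/\QQ$ acts transitively on them), so $L(\chi_1,s)$ and $L(\chi_2,s)$ are Galois conjugate polynomials over $\QQ(\zeta_{p^m})$ and therefore share the same $q$-adic Newton polygon. Since $m \geq m_0$, Theorem \ref{T:main theorem} identifies this common slope multiset as the union displayed in the statement of the corollary.

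Next I would invoke the standard fact that the $q$-adic Newton polygon of a product of polynomials is the merger, with multiplicity, of the slope sequences of its factors. Since $P(m,s)$ is the product of exactly $(p-1)p^{m-1}$ L-functions, each with the same slope multiset, every slope listed in the displayed union appears in the Newton polygon of $P(m,s)$ with multiplicity precisely $(p-1)p^{m-1}$, giving the claimed description. As a consistency check the degrees match: each $L(\chi,s)$ has degree $p^{m-1}d - 1$ by the result of Liu--Wei cited in the introduction, so $(p-1)p^{m-1}(p^{m-1}d - 1) = \deg P(m,s)$, in agreement with the formula stated in the excerpt. Given Theorem \ref{T:main theorem}, no step here presents a genuine obstacle; the deduction is essentially bookkeeping, and the only place where a mistake could creep in is miscounting the zero slope (which occurs only in the $i=0$ block and is explicitly excluded in the displayed union).
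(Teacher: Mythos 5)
Your deduction is correct and is exactly the argument the paper has in mind when it writes ``we deduce'' before the corollary (which it leaves unproved): count the $(p-1)p^{m-1}$ characters of conductor $p^m$, note they all have the same $q$-adic Newton polygon by the Galois-conjugacy remark preceding Theorem~\ref{T:main theorem}, apply Theorem~\ref{T:main theorem} to identify the common slope multiset, and use that the Newton polygon of $P(m,s) = \prod_{m_\chi = m} L(\chi,s)$ is the merger of the slope multisets of the factors. Nothing to add; the degree consistency check is a nice sanity confirmation.
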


\begin{example} \label{ordinary example}
We say that $L(\chi, s)$ is \emph{ordinary} if its Newton polygon is equal to its Hodge polygon; see for example \cite[\S1.2]{wan3} or \cite[\S3]{fu-wan}.   The Newton slopes are easily determined in this case.  For instance, when $p \equiv 1 \pmod d$, the example after Corollary~3.4 in \cite{wan3} says that for any character $\chi$ of order~$p$, the L-function $L(\chi, s)$ is ordinary.  Theorem 2.9 in \cite{fu-wan} then implies that $L(\chi, s)$ is ordinary for all non-trivial $\chi$.  One deduces that for each integer $m\geq 1$, the $q$-adic Newton polygon of $P(m, s)$ has slopes 
\[
\bigcup_{i=0}^{p^{m- 1}-1} \big\{\frac{i}{p^{m-1}}, \frac{\frac{1}{d}+i}{p^{m-1}}, \dots, \frac{\frac{d-1}{d}+i}{p^{m-1}}\big\} - \{0\} = \big\{\frac 1{dp^{m-1}},
\frac 2{dp^{m-1}}, \dots, \frac {dp^{m-1}-1}{dp^{m-1}}
\big\},
\]
each counted with multiplicity $(p-1)p^{m-1}$.
Corollary~\ref{C:Newton slopes cor} is an analogue of this result which does not require $L(\chi, s)$ to be ordinary.
\end{example}

Both the proof of Theorem~\ref{T:main theorem} and the proof of \cite[Theorem~2.9]{fu-wan} referenced in Example~\ref{ordinary example}, rely on the study of a $T$-adic version of the characteristic function, which is an entire version of the L-function. This $T$-adic characteristic function interpolates the characteristic functions associated to all finite characters.  The key observation we make in this paper is that the obvious lower bound of the $T^{a(p-1)}$-adic Newton polygon of the $T$-adic characteristic function  agrees, at  all points in an arithmetic progression, with the $\pi_\chi^{a(p-1)}$-adic polygon for the characteristic function for the specialization at a finite  character $\chi$.  Therefore, the two Newton polygons agree at these points. As a by-product, we prove that a strong form of the $T$-adic Riemann hypothesis holds for 
the $T$-adic L-function in \cite{fu-wan}, namely, the splitting field of the meromorphic $T$-adic L-function is a uniformly finite extension.

The theorem is largely motivated by the conjectural behavior of the Coleman-Mazur eigencurve near the boundary of weight space \cite{buzzard-kilford}, which concerns a different tower of curves: Igusa tower.
Similar to that situation, we can also define an eigencurve parametrizing the zeros of various $L(\chi,s)$, if properly normalized.  
In our setup, we can prove the following
\begin{theorem}
The eigencurve constructed for the Artin-Schreier-Witt tower, when restricted to the boundary of the weight space, is an infinite union of subspaces which are finite and flat over the weight annulus, with slopes given by arithmetic progressions governed by the weight parameter.
\end{theorem}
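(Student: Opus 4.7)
The plan is to realize the eigencurve as the vanishing locus of the $T$-adic characteristic power series $C_f(s,T) \in \ZZ_p[\![T]\!]\{\{s\}\}$, whose specialization under $T \mapsto \pi_\chi$ recovers (up to a straightforward factor) the L-function $L(\chi,s)$. The weight space is taken to be $\calW = \Spm \ZZ_p[\![T]\!]$, a rigid analytic open unit disc, and its boundary the open annulus $\calW^* = \{T : 0 < v_p(T) < \epsilon\}$, with $\epsilon > 0$ chosen small enough that every point of $\calW^*$ corresponds to a finite character $\chi$ with $m_\chi \geq m_0$.

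The first step is to read off the $v_T$-adic Newton polygon of the entire series $C_f(s,T)$ from the analysis underlying Theorem~\ref{T:main theorem}. The key observation highlighted in the excerpt---that an explicit combinatorial lower bound for the $T^{a(p-1)}$-adic Newton polygon of $C_f(s,T)$ is attained on an arithmetic progression of indices, matching the $\pi_\chi^{a(p-1)}$-adic Newton polygon of the specialization whenever $m_\chi \geq m_0$---forces the sequence of $v_T$-slopes of $C_f(s,T)$ to be, with the convention $\alpha_0 := 0$,
\[
\bigl\{\,a(p-1)p^{m_0-1}(\alpha_k + i) : 0 \leq k \leq dp^{m_0-1}-1,\ i \geq 0\,\bigr\} \setminus \{0\}.
\]
These slopes constitute $dp^{m_0-1}$ arithmetic progressions with common difference $a(p-1)p^{m_0-1}$, and are independent of the weight.

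The second step is Weierstrass factorization. Since the $v_T$-slopes are discrete and tend to infinity, standard rigid analysis over the affinoid subdomains exhausting $\calW^*$ yields a convergent infinite product
\[
C_f(s,T) = \prod_{k \geq 1} P_k(s,T),
\]
where each $P_k(s,T) \in \calO(\calW^*)[s]$ is a distinguished polynomial in $s$ of degree equal to the multiplicity of the $k$-th slope $\lambda_k$, and all of its zeros over any $T_0 \in \calW^*$ have $v_T$-slope exactly $\lambda_k$. Each $V(P_k) \to \calW^*$ is then finite and flat, and since the $p$-adic slope of a zero at weight $T_0$ equals $\lambda_k \cdot v_p(T_0)$, the slopes on each piece depend linearly on the weight parameter with proportionality constants forming the arithmetic progressions above. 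The boundary eigencurve thus decomposes as the infinite disjoint union $\bigsqcup_{k \geq 1} V(P_k)$.

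The main obstacle is globalizing the pointwise Newton-polygon statement of Theorem~\ref{T:main theorem} to a family-wide equality over the entire annulus $\calW^*$. The combinatorial lower bound on the $T$-adic Newton polygon of $C_f(s,T)$ is routine, but verifying that it is attained requires the match to hold at infinitely many specialization points simultaneously; the density of classical weights $\pi_\chi$ with $m_\chi \geq m_0$ inside $\calW^*$ supplies this, though some care is needed to ensure the Weierstrass factorization glues coherently across the affinoids exhausting $\calW^*$.
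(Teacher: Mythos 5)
There is a genuine gap, and it is located in your "first step." You claim to read off the $v_T$-adic Newton polygon of the entire series $C^*(T,s)$ and conclude that its $v_T$-slopes are exactly the arithmetic progressions $\{a(p-1)p^{m_0-1}(\alpha_k+i)\}$. This is precisely the assertion the paper singles out as \emph{open} in the remark following the proof of Theorem~\ref{T:main theorem}. What is actually proved (Proposition~\ref{P:ak units}, Corollary~\ref{C:T-adic line segment}) is that the $T^{a(p-1)}$-adic Newton polygon of $C^*(T,s)$ meets the Hodge lower bound only at the indices $nd$ and $nd+1$; in between it may lie strictly above, and moreover it need not match the specialized $\pi_\chi^{a(p-1)}$-adic polygons at any index. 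The reason is that $v_T\big(a_i(T)\big)$ records the lowest power of $T$ occurring in $a_i$, whereas $v_{\pi_\chi}\big(a_i(\pi_\chi)\big)$ for $|\pi_\chi|$ near $1$ is controlled by the lowest power of $T$ whose coefficient is a $p$-adic unit — the quantity $\lambda'_i$ from the proof of Theorem~\ref{T:NP independence}. These can differ (think of a coefficient $pT^5 + T^{10}$: $T$-adic valuation $5$, but $\pi_\chi$-adic behavior governed by $T^{10}$). Invoking "density of classical weights" does not close this: one pointwise knows the specialized polygons, not the $T$-adic polygon of the family, and the two do not coincide in general.

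The paper avoids this by a two-level decomposition. It first factors $C^*(T,s) = \prod_i P_i(s)$ into degree-$d$ polynomials over the \emph{entire} disc $\calW$, using only the known unit leading coefficients at indices $nd, nd+1$; each $\calC_{f,i} := V(P_i)$ is finite flat of degree $d$ over $\calW$ and they are pairwise disjoint. Only then, over the annulus $\calW^{\geq r}$, does it subdivide each $\calC_{f,i}$ into constant-slope-ratio pieces $\calC_{f,i}^{(j)}$, cut out as affinoid loci by $v_{\GG_m}(\boldsymbol\alpha(z)) = ap^{m_0-1}(p-1)(\beta_j+i)\, v_\calW(\wt(z))$; their finiteness and flatness over the annulus follows because, by Theorem~\ref{T:NP independence} and Remark~\ref{R:same for non finite chars}, the fiber count at every point of $\calW^{\geq r}$ equals the multiplicity of $\beta_j$ in the fixed list of $\alpha$'s. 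To salvage your one-shot Weierstrass factorization you would need to replace the $T$-adic Newton polygon by the "unit-coefficient" polygon built from the $\lambda'_i$ and then justify a Weierstrass preparation over the non-affinoid annulus relative to that polygon — a nontrivial extra step that the paper's coarse-then-fine route avoids.
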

We refer to Section~\ref{Sec:eigencurve} for a precise statement.
We feel that this theorem provides evidence for the case of Coleman-Mazur eigencurves.\footnote{Building on some key arguments of this paper, R.\ Liu and the second and third authors of this paper recently proved in \cite{liu-wan-xiao} similar results for the Coleman-Mazur eigencurves for definite quaternion algebras over $\QQ$.}
We however need to point out that the two cases are essentially different in several ways.
To name a few, (1) the Coleman-Mazur eigencurve corresponds to the Igusa tower where the Galois group is $\ZZ_p^\times$; 
(2) the relation between the analogous L-function and the characteristic function for the Coleman-Mazur eigencurve is twisted by power of cyclotomic characters; (3) the Coleman-Mazur eigencurve is expected to be very complicated near the center of the weight space, in contrast to the relatively simple characterization of Artin-Schreier-Witt eigencurve (see Theorem~\ref{T:eigencurve theorem}).

We end the introduction with a few questions.

\begin{question}
We restricted ourselves to the case when all coefficients of $\hat f(x)$ are Teichm\"uller lifts.
It would be interesting to know to what extent one can loosen this condition.
\end{question}

\begin{question}
We restricted ourselves to the case of $\ZZ_p$-extensions.  One can certainly slightly modify the defining equation for $C_m$ by changing $y_{\underline m}^F$ to $y_{\underline m}^{F^r}$ to consider a $\ZZ_{p^r}$-extension for some $r$.
It would be interesting to know if the analogous statements of our main theorems continue to hold in this case.
\end{question}

\begin{question}
It is also natural to consider an Artin-Schreier-Witt extension of higher dimensional tori, as in \cite{fu-wan}.  
In this case, the expected slopes in Theorem~\ref{T:main theorem} will be of a different form.
\end{question}

\begin{question}
We consider only Artin-Schreier-Witt towers in this paper.
It would be interesting to know if the Newton slopes of the L-functions for other $\ZZ_p$-towers of curves in the literature enjoy a similar periodicity property. 
\end{question}

\subsection*{Acknowledgements}
We thank Jun Zhang and Hui Zhu for interesting discussions.  The first author is partially supported by the Danish National Research Foundation through the Niels Bohr Professorship. The second author is partially supported by Simons Fellowship. 
The third author is partially supported by  Simons Collaboration Foundation \#278433, and  CORCL research grant from University of California, Irvine.

\section{$T$-adic exponential sums}
In this section, we recall properties of the L-function associated to a $T$-adic exponential sum as considered by C. Liu and the second author in \cite{fu-wan}. Its specializations to appropriate values of $T$ interpolate the L-functions considered above.

\begin{definition}
We use $T$ as an indeterminate and let $f$ and $\hat{f}$ be as in the introduction.
For a positive integer $k$, the \emph{$T$-adic exponential sum} of $f$ over $\FF_{q^k}^\times$ is the sum:
\[
S^*(k, T): = \sum_{x \in \FF_{q^k}^\times} (1+T)^{\Tr_{\QQ_{q^k} / \QQ_p}(\hat f(\hat x))} \in \ZZ_p\llbracket T\rrbracket.\footnote{This sum agrees with $S_f(k,T)$ in \cite{fu-wan} (in the one-dimensional case).}
\]
Note that the sum is taken over $\FF_{q^k}^\times$ as opposed to $\FF_{q^k}$. 
The $*$-notation simply reminds us that we are working over the torus $\GG_m$. 
The associated \emph{$T$-adic L-function} of $f$ over $\GG_{m, \FF_q}$ is the generating function
\begin{equation}
\label{E:L(T,s)}
L^*(T,s) = \exp \big( \sum_{k=1}^\infty S^*(k,T)\frac{s^k}{k} \big)
 = \prod_{x \in |\GG_m|}
\frac{1}{1-(1+T)^{\Tr_{\QQ_{q^{\deg(x)}}/\QQ_p}(\hat f(\hat x))}s^{\deg(x)}} \in 1+ s \ZZ_p\llbracket T \rrbracket \llbracket s\rrbracket.
\end{equation}
We put
\[
L(T,s) = \frac{L^*(T,s)}{ 1-(1+T)^{\Tr_{\QQ_q / \QQ_p}(\hat f(0))}s} = \prod_{x \in |\AAA^1|}
\frac{1}{1-(1+T)^{\Tr_{\QQ_{q^{\deg(x)}}/\QQ_p}(\hat f(\hat x))}s^{\deg(x)}}\in 1+ s\ZZ_p\llbracket T \rrbracket \llbracket s \rrbracket.
\footnote{Our $L^*(T,s)$ (but not $L(T,s)$) agrees with the $L_f(T,s)$ in \cite{fu-wan} (in the one-dimensional case).}
\]
Note that $L(T,s)$ (resp.\ $L^*(T,s)$) is the L-function over $\AAA^1$ (resp.\ $\GG_m$) associated to the character $\Gal(C_\infty/ \AAA^1) \cong \ZZ_p \to \ZZ_p\llbracket T \rrbracket^\times$ sending $1$ to $1+T$.
It is clear that for a finite character $\chi: \Zp \to \CC_p^\times$, we have $L(T,s)|_{T = \pi_\chi} = L(\chi, s)$ for $\pi_\chi = \chi(1) -1$.
\end{definition}

The $T$-adic L-function is a $T$-adic meromorphic function in $s$.  
It is often useful to consider a related holomorphic function in $s$ defined as follows:
\begin{definition}
The \emph{$T$-adic characteristic function} of $f$ over $\GG_{m,\FF_q}$ is the generating function
\[
C^*(T,s) = \exp \big( \sum_{k=1}^\infty \frac 1{1-q^k} S^*(k,T)\frac{s^k}{k} \big).
\footnote{Our $C^*(T,s)$ agrees with the $C_f(T,s)$ in \cite{fu-wan} (in the one-dimensional case); we will not introduce a version $C(T,s)$ without the star since it will not be used in our proof.}
\]
Clearly, we have 
$$C^*(T,s) = L^*(T,s) L^*(T,qs) L^*(T,q^2s) \cdots, \quad \textrm{and}\quad
L^*(T, s) = \frac{C^*(T,s)}{C^*(T,qs)}.$$
In particular, $C^*(T,s) \in 1+ s\ZZ_p
\llbracket T \rrbracket \llbracket s\rrbracket$.

Similarly, for a nontrivial finite  character $\chi: \ZZ_p \to \CC^\times_p$,
we put 
\[
L^*(\chi,s) = \big( 1 - \chi(\Tr_{\QQ_q/\QQ_p}(\hat f(0)))  s\big)
L(\chi, s).
\]
This is just the L-function of the character $\chi$ over $\GG_{m,\FF_q}$. 
The \emph{characteristic function} for $\chi$ is defined to be 
\[
C^*(\chi,s): = L^*(\chi,s) L^*(\chi,qs)L^*(\chi,q^2s) \cdots.
\]
It follows that 
\[
C^*(\chi,s) =C^*(T,s)|_{T = \pi_\chi} \quad \textrm{and} \quad L^*(\chi,s) = \frac{C^*(\chi,s)}{C^*(\chi,qs)}.
\]
\end{definition}

\begin{theorem}
\label{T:HP bound}The $T$-adic characteristic function $C^*(T, s)$ is $T$-adically entire in $s$.
Moreover, the $T^{a(p-1)}$-adic Newton polygon of $C^*(T,s)$ lies above the polygon whose slopes are
$0, \frac1d, \frac2d, \dots$, i.e., it lies above the polygon with vertices $(k, \frac{k(k-1)}{2d})$.
\end{theorem}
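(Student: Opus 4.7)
The plan is to adapt Dwork's trace formula to the $T$-adic setting, as carried out by C.~Liu and the second author in \cite{fu-wan}; both assertions of the theorem are essentially contained there, and our task is mainly to reformulate the Hodge polygon bound in the $T^{a(p-1)}$-adic normalization used below. Concretely, I would first recall from \cite{fu-wan} the expression of $C^*(T,s)$ as a Fredholm determinant $\det(I - sU)$, where $U$ is a nuclear operator on a $T$-adic Banach space $B$ of overconvergent power series in one variable $x$ over $\ZZ_p\llbracket T \rrbracket$. The operator factors as $U = \sigma^{-1} \circ \psi_q \circ m_F$, where $m_F$ is multiplication by the Dwork splitting function $F(T, x) = \prod_{i=0}^{d} E(T, \hat b_i x^i)$ built from an Artin--Hasse-style series $E(T, x) \in \ZZ_p\llbracket T \rrbracket \llbracket x \rrbracket$ that encodes the character $n \mapsto (1+T)^n$; $\psi_q \bigl( \sum a_n x^n \bigr) = \sum a_{qn} x^n$ is Dwork's contraction; and $\sigma$ is the Frobenius lift on $\ZZ_q\llbracket T \rrbracket$ fixing $T$.

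The key $T$-adic input from \cite{fu-wan} is that, writing $E(T, x) = \sum_n E_n(T) x^n$, one has $v_T(E_n(T)) \geq n$, the $T$-adic analogue of Dwork's classical bound on $E_p(\pi x)$ with $\pi^{p-1} = -p$. Since $F$ is the product of $d+1$ copies of $E$ whose arguments are monomials of $x$-degrees $0, 1, \dots, d$, any monomial $x^j$ in $F(T, x) = \sum_j F_j(T) x^j$ arises as a sum of products whose total number of nonconstant $E$-factors is at least $\lceil j/d \rceil$; hence $v_T(F_j(T)) \geq \lceil j/d \rceil$. In the monomial basis $\{x^n\}_{n \geq 0}$ of $B$, the matrix entries of $U$ work out to $U_{m,n} = F^{\sigma^{-1}}_{qm-n}(T)$ (with $F_j = 0$ for $j < 0$), so $v_T(U_{m,n}) \geq \lceil (qm-n)/d \rceil$.

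Expanding the Fredholm determinant gives
\[
c_k = (-1)^k \sum_{\substack{S \subset \ZZ_{\geq 0} \\ |S| = k}} \det\bigl((U_{m,n})_{m,n \in S}\bigr),
\]
and each determinant is a signed sum of products $\prod_{n \in S} U_{\tau(n), n}$ over permutations $\tau \in \mathrm{Sym}(S)$. Since $\sum_{n \in S} \tau(n) = \sum_{n \in S} n$, each such product has $T$-adic valuation at least $\frac{q-1}{d} \sum_{n \in S} n$, and minimizing over $|S| = k$ yields $(q-1) k(k-1)/(2d)$, achieved by $S = \{0, 1, \ldots, k-1\}$. Using the elementary identity $q - 1 = (p-1)(1 + p + \cdots + p^{a-1}) \geq a(p-1)$, we conclude $v_{T^{a(p-1)}}(c_k) \geq k(k-1)/(2d)$, which simultaneously gives the entireness of $C^*(T, s)$ in $s$ (right-hand side grows quadratically) and the claimed lower bound on the $T^{a(p-1)}$-adic Newton polygon.

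The main technical obstacle is the $T$-adic coefficient estimate $v_T(E_n(T)) \geq n$ on the Artin--Hasse splitting function, which requires care in the explicit construction of $E(T, x)$ (a $T$-adic cousin of $E_p(\pi x)$); this is precisely the analysis carried out in \cite{fu-wan}. Once that input is in hand, the rest of the argument is a routine instance of the Dwork-style Hodge bound together with the inequality $q - 1 \geq a(p-1)$.
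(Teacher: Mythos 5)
Your argument is essentially the paper's (and the standard Dwork-theoretic) argument in the case $a=1$, $q=p$: form the splitting function $E_f(x)=\prod_{i=0}^d E(\hat b_i\pi x^i)$, bound the $T$-adic valuations of its coefficients, show the resulting Dwork operator is nuclear with the Hodge-type lower bound, and read off both entireness and the Newton polygon estimate from the Fredholm expansion. For $a=1$ all of this is sound and matches the paper's sketch.

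The problem is the case $a>1$, which you try to absorb via ``$q-1\geq a(p-1)$.'' Your operator is $U=\sigma^{-1}\circ\psi_q\circ m_F$ with $F=\prod_{i=0}^d E(T,\hat b_i x^i)$. But this $F$ is the single-step (\,$p$-Frobenius\,) splitting function; to pair with $\psi_q$ and get the trace formula $\det(I-sU)=C^*(T,s)$ one must take $F=\prod_{j=0}^{a-1}E_f^{\sigma^j}(x^{p^j})$. For that $F$, the $x^j$-coefficient comes from solving $\sum m_j p^j=j$ and has $T$-adic valuation only about $j/(dp^{a-1})$, not $\lceil j/d\rceil$, so the Fredholm computation no longer produces $(q-1)k(k-1)/(2d)$. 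And the conclusion $v_T(c_k)\geq (q-1)k(k-1)/(2d)$ cannot be right: Proposition~\ref{P:ak units} and Corollary~\ref{C:T-adic line segment} show that the $T^{a(p-1)}$-adic Newton polygon of $C^*(T,s)$ actually \emph{passes through} the Hodge vertices $(nd,\tfrac{n(nd-1)}{2})$, i.e.\ $v_T(c_{nd})=a(p-1)\cdot\tfrac{nd(nd-1)}{2d}$ exactly. For $d=1$, $k=2$ this gives $v_T(c_2)=a(p-1)$, which is strictly smaller than your claimed lower bound $q-1$ whenever $a>1$. The correct $T$-adic bound is $a(p-1)k(k-1)/(2d)$; to obtain it one either quotes \cite[Theorem~5.2]{fu-wan} directly, or runs the argument with the $\sigma^{-1}$-semilinear $\alpha=\sigma^{-1}\circ\psi_p\circ m_{E_f}$ over $\ZZ_p$ (with the extra bookkeeping coming from the $a$-dimensional $\ZZ_p$-basis of $\ZZ_q$ and the identity $\det_{\ZZ_p}(1-s\alpha)=\det_{\ZZ_q}(1-s^a\alpha^a)$), rather than by the shortcut $q-1\geq a(p-1)$.
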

\begin{proof}
The first statement is \cite[Theorem~4.8]{fu-wan}, and the second one follows from the Hodge bound \cite[Theorem~5.2]{fu-wan}.
Note that the polyhedron $\Delta$ in {\it loc.~cit.}~is nothing but the interval $[0,d]$, and the function $W(k)$ in {\it loc.~cit.}~is the constant function $1$.
So $\mathrm{HP}_q(\Delta)$ in {\it loc.~cit.}~is nothing but the polygon written in the statement of our theorem, after being renormalized from a $T$-adic Newton polygon to a $T^{a(p-1)}$-adic Newton polygon, namely, after scaling vertically by $1/a(p-1)$.

For the convenience of the reader, we sketch the proof in the simpler case when $a=1$ (that is, $q=p$).
We first recall that the Artin-Hasse exponential series is defined as
\[
E(\pi) = \exp\big( \sum_{i=0}^\infty \frac{\pi^{p^i}}{p^i} \big) = \prod_{p \nmid i, i \geq 1} \big( 1-\pi^i\big)^{-\mu(i)/i} \in 1+ \pi + \pi^2 \ZZ_p\llbracket \pi \rrbracket,
\]
where $\mu(\cdot)$ is the M\"obius function.
We take $\pi$ to be the uniformizer in $\QQ_p\llbracket T \rrbracket$ such that $T = E(\pi) -1 = \pi +$ higher degree terms. Simple iteration calculation shows that $\pi \in T + T^2 \Zp\llbracket T\rrbracket$.

For our given polynomial $\hat f(x) = \sum_{i=0}^d \hat b_i x^i \in \ZZ_p[x]$, we put
\[
E_f(x) = \prod_{i=0}^d E(\hat b_i \pi x^i) \in \Zp\llbracket \pi \rrbracket \llbracket x \rrbracket.
\]
Dwork's splitting lemma \cite[Lemma~4.3]{fu-wan} (which can be checked by hand) says that for $x_0 \in \FF_{p^k}$, we have
\[
(1+T)^{\Tr_{\ZZ_{p^k}/ \Zp}(\hat f(\hat x_0))} = E(\pi)^{\Tr_{\ZZ_{p^k}/ \Zp}(\hat f(\hat x_0))} = \prod_{j=0}^{k-1} E_f(\hat{x}_0^{p^j}). 
\]

Let $B$ denote the Banach module over  $\ZZ_p\llbracket \pi^{1/d}\rrbracket$ with the formal basis $\Gamma=\{1, \pi^{1/d}x, \pi^{2/d}x^2, \dots\}$, that is, 
\[
B= \left\{
\sum_{i=0}^\infty c_i \pi^{i/d} x^i\; \big|\; c_i \in \ZZ_p\llbracket \pi^{1/d}\rrbracket 
\right\}.
\]
It is clear that we can write 
$$E_f(x)=\sum_{j=0}^\infty u_j \pi^{j/d} x^j \in B, \quad \textrm{for } u_j \in \ZZ_p.$$
Let $\psi_p$ denote the operator on $B$ defined by 
$$\psi_p(\sum_{i\geq 0}^\infty c_i x^i) = \sum_{i\geq 0}^\infty c_{pi} x^i.$$ 
Let $M$ be the matrix of 
the composite linear operator $\psi_p \circ E_f: B \to B$ with respect to the basis $\Gamma$, 
where $E_f$ is just multiplication by the power series $E_f(x)$.  One checks that 
\[
(\psi_p \circ E_f) \big( \pi^{i/d}x^i\big) = \sum_{j=0}^\infty u_{pj-i} \pi^{(p-1)j/d} \cdot (\pi^{j/d} x^j)
\]
Therefore, the infinite matrix $M=(u_{pj-i}\pi^{(p-1)j/d})_{0\leq i, j < \infty}$  has the shape 
\[
M=\begin{pmatrix}
* & * & * & \cdots \\
\pi^{\frac{p-1}{d}} * & 
\pi^{\frac{p-1}{d}} * &
\pi^{\frac{p-1}{d}} * & \cdots\\
\pi^{\frac{2(p-1)}{d}} * & 
\pi^{\frac{2(p-1)}{d}} * &
\pi^{\frac{2(p-1)}{d}} * & \cdots\\
\vdots & \vdots &\vdots & \ddots
\end{pmatrix}, 
\]
where $*$ denotes an element in $\ZZ_p\llbracket \pi^{1/d}\rrbracket$.  It follows that the matrix $M$ is nuclear. 

Now, one computes that 
$$(p-1)\Tr(M)=(p-1)\sum_{i=0}^{\infty} u_{(p-1)i}\pi^{(p-1)i/d}=\sum_{x\in \FF_p^{\times}}(\sum_{j=0}^{\infty} u_j\pi^{j/d}\hat{x}^j)=
\sum_{x\in \FF_p^{\times}}E_f(\hat{x}) = \sum_{x\in \FF_p^{\times}} (1+T)^{f(\hat{x})}.$$
Similarly, using the obvious property $G(x)\circ \psi_p = \psi_p \circ G(x^p)$ which holds for any power series $G(x)$, 
one checks  that for every positive integer $k$, 
$(p^k-1)\Tr(M^k)$ is given by 
$$ (p^k-1)\Tr((\psi_p\circ E_f)^k)=(p^k-1)\Tr(\psi_p^k\circ \prod_{i=0}^{k-1}E_f(x^{p^i}))=\sum_{x\in \FF_{p^k}^{\times}}
\prod_{i=0}^{k-1}E_f(\hat{x}^{p^i}) =S^*(k,T).$$
This is the additive form of the $T$-adic Dwork's trace formula. Its equivalent multiplicative form is 
\[
C^*(T, s) = \exp \big( \sum_{k=1}^\infty \frac 1{1-p^k} S^*(k,T)\frac{s^k}{k} \big)=  
\exp \big( \sum_{k=1}^\infty {-\Tr(M^k)}\frac{s^k}{k} \big) = \det \big( I - Ms).
\]
From the nuclear shape of $M$, it is clear that the $T^{(p-1)}$-adic (i.e., $\pi^{(p-1)}$-adic) Newton polygon of the characteristic 
power series of $M$ lies above the Hodge polygon of $M$, that is the polygon with slopes $0, \frac1d, \frac2d, \dots$.
\end{proof}

\section{Periodicity of Newton polygons}
Using the interplay of the $T^{a(p-1)}$-adic Newton polygon of $C^*(T, s)$ and the $\pi_{\chi}^{a(p-1)}$-adic Newton polygon of $L^*(\chi,s)$, 
one can deduce strong periodicity results.
From this, we recover results for $L(T,s)$ easily.

\begin{notation}
Let $C^*(T, s) = 1+ a_1(T)s + a_2 (T)s^2+ \cdots \in 1+ s\ZZ_p\llbracket T\rrbracket \llbracket s \rrbracket$ be the power series expansion in $s$.  Put $a_0=1$.
Then Theorem~\ref{T:HP bound} says that 
$$v_{T^{a(p-1)}}(a_n(T)) \geq \frac{n(n-1)}{2d}.$$
In other words, each $a_n(T)$ can be written as a power series in $T$:
\[
a_n(T) = a_{n, \lambda_n} T^{\lambda_n} + a_{n, \lambda_n+1} T^{\lambda_n+1}
+ a_{n, \lambda_n+2} T^{\lambda_n+2} +\cdots,
\]
with $a_{n,i} \in \ZZ_p$, $a_{n,\lambda_n} \neq 0$, and 
$$\lambda_n \geq \frac{n(n-1)a(p-1)}{2d}.$$
We call $a_{n, \lambda_n} T^{\lambda_n}$ the \emph{leading term} of $a_n(T)$, and $a_{n,\lambda_n}$ the \emph{leading coefficient} of $a_n(T)$.
\end{notation}

\begin{proposition}
\label{P:existence of turning point}
\emph{(1)}
For every nontrivial finite character $\chi:\ZZ_p \to \CC_p^\times$, the $\pi_\chi^{a(p-1)}$-adic Newton polygon of $C^*(\chi,s)$ lies above the polygon whose slopes are $0, \frac 1d, \frac 2d, \dots$, that is the polygon with vertices $(n, \frac{n(n-1)}{2d})$ for $n \in \ZZ_{\geq 0}$.

\emph{(2)} 
Let $\chi_1$ denote a nontrivial character of $\ZZ_p$ factoring through the quotient $\ZZ_p/p\ZZ_p$.
The $q$-adic Newton polygon of $C^*(\chi_1,s)$ lies below the polygon starting at $(0,0)$, and then with a segment of slope 0, then $d-1$ segments of slope $\frac{1}{2}$, then a segment of slope $1$, then $d-1$ segments of slope $1 + \frac{1}{2}$, and so on.

\emph{(3)}
The $q$-adic Newton polygon of $C^*(\chi_1, s)$ contains the line segment connecting $\big(nd, \frac{n(nd-1)}{2}\big)$ and $\big(nd+1, \frac{n(nd-1)}{2}+n\big)$ for all $n \in \ZZ_{\geq 0}$.
\end{proposition}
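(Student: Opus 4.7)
The plan is to deduce Part (1) directly from Theorem~\ref{T:HP bound} by specialization, establish Part (3) via computation of the leading terms of $a_{kd}(T)$ and $a_{kd+1}(T)$ using Dwork's trace formula, and then derive Part (2) from (1) and (3) by convexity of the Newton polygon. Since $\chi_1$ has order $p$, $v_p(\pi_{\chi_1}) = 1/(p-1)$, so $v_q(\pi_{\chi_1}^{a(p-1)}) = 1$ and the $\pi_{\chi_1}^{a(p-1)}$-adic and $q$-adic Newton polygons of $C^*(\chi_1,s)$ coincide. Part (1) is immediate: writing $a_n(T) = \sum_{i \geq \lambda_n} a_{n,i} T^i$ with $\lambda_n \geq n(n-1)a(p-1)/(2d)$ (Theorem~\ref{T:HP bound}), the continuous substitution $T \mapsto \pi_\chi$ gives $v_{\pi_\chi}(a_n(\pi_\chi)) \geq \lambda_n$, hence $v_{\pi_\chi^{a(p-1)}}(a_n(\pi_\chi)) \geq n(n-1)/(2d)$.

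For Part (3), I would use the Dwork trace formula from the proof of Theorem~\ref{T:HP bound}, namely $C^*(T,s) = \det(I - Ms)$ with $M = (u_{pj-i}\pi^{(p-1)j/d})_{i,j \geq 0}$ (writing the argument for $a=1$, the general case being analogous). The coefficient of $s^n$ expands as $(-1)^n \sum_{|I|=n} \det(M_I)$, and the column-weight inequality $\sum_{j \in I}(p-1)j/d \geq (p-1)n(n-1)/(2d)$ is strict unless $I = I_0 := \{0,1,\ldots,n-1\}$, so the leading $\pi$-contribution to $a_n(T)$ comes only from $\det(M_{I_0})$. The key algebraic input is that when $d \mid j$, the $\pi$-expansion of $u_j$ starts with $e_{j/d}\hat b_d^{j/d} \in \ZZ_p^\times$ (since the Artin--Hasse coefficients $e_k$ are units and $\hat b_d = 1$), while $u_j$ has strictly positive $\pi$-valuation when $d \nmid j$. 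Extracting the lowest-order-in-$\pi$ part of $\det(M_{I_0})$ then restricts the permutation sum to those $\sigma$ satisfying $d \mid p\sigma(i) - i$ for every $i$; since $\gcd(p,d) = 1$, these $\sigma$ correspond to bijections of the residue classes modulo $d$, and a direct combinatorial computation, using $\hat b_d = 1$ to collapse the $\hat b_d^{\bullet}$ factors, shows that the resulting signed sum is a $\ZZ_p^\times$-unit for both $n = kd$ and $n = kd+1$. Specializing $T \mapsto \pi_{\chi_1}$ yields
\[
v_q(a_{kd}(\pi_{\chi_1})) = \tfrac{k(kd-1)}{2}, \qquad v_q(a_{kd+1}(\pi_{\chi_1})) = \tfrac{k(kd+1)}{2},
\]
which matches the Hodge polygon exactly. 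Combining with Part (1), the $q$-adic Newton polygon of $C^*(\chi_1,s)$ passes through both $(kd, \tfrac{k(kd-1)}{2})$ and $(kd+1, \tfrac{k(kd+1)}{2})$, and the slope-$k$ line segment between these adjacent integer points is the one asserted in Part (3).

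For Part (2), use convexity of the Newton polygon together with Part (3): between consecutive pinned vertices $(kd+1, \tfrac{k(kd+1)}{2})$ and $((k+1)d, \tfrac{(k+1)((k+1)d-1)}{2})$, the polygon lies on or below the chord, which has slope $(2k+1)/2 = k + \tfrac{1}{2}$ by a short calculation, matching the upper polygon in (2) on that range. The initial slope-$0$ segment from $(0,0)$ to $(1,0)$ follows from $a_0 = 1$ and $a_1(T) = -\Tr(M) \equiv -1 \pmod{T}$, as $M_{00} = u_0 = 1$ is the unique unit contribution to $\Tr(M)$. The main obstacle is the combinatorial verification that the leading $\pi$-coefficient of $\det(M_{I_0})$ is a $\ZZ_p^\times$-unit for $n = kd$ and $n = kd+1$: one must enumerate the residue-preserving permutations $\sigma$ and show that their signed contributions to the determinant do not cancel modulo $p$, where the hypothesis $b_d = 1$ is essential for reducing the computation to a manageable identity.
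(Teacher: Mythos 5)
Your Part~(1) matches the paper. But for Parts~(2) and~(3) you invert the paper's logic, and the pivot you chose has a gap.

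The paper proves (2) \emph{first}, and then gets (3) by squeezing. For (2) it uses $C^*(\chi_1,s)=\prod_{i\geq 0}L^*(\chi_1,q^is)$ together with two facts about $L^*(\chi_1,s)$: it is a polynomial of degree $d$ whose first factor $1-\chi_1(\Tr\hat f(0))s$ has slope $0$, and $L(\chi_1,s)$ is a polynomial of degree $d-1$ that is pure of $q$-weight~$1$, so its Newton polygon runs from $(0,0)$ to $(d-1,\tfrac{d-1}{2})$ and, being convex, lies below that chord. Translating one unit right and stacking the $q^i$-twisted blocks gives exactly the upper bound polygon of (2). The nontrivial external input is Weil's RH for the Artin--Schreier curve $C_1$, which pins down the endpoint $(d-1,\tfrac{d-1}{2})$. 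Your derivation of (2) from (3) by convexity (the chord between the pinned vertices has slope $k+\tfrac12$, as you compute) is correct and would be fine \emph{if} (3) were established.

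The problem is your route to (3). You try to show directly, via Dwork's trace formula, that the leading $\pi$-coefficient of $\det(M_{I_0})$ is a $p$-adic unit for $n=kd$ and $n=kd+1$. Note that this is precisely the content of Proposition~\ref{P:ak units} of the paper, which the paper deduces \emph{from} (3), not the other way around; you are trying to bootstrap the conclusion by brute combinatorics. Your argument rests on the assertion that for $d\mid j$ the leading coefficient of $u_j$ equals $e_{j/d}\hat b_d^{j/d}$ ``which is a unit since the Artin--Hasse coefficients $e_k$ are units.'' That identification of the leading coefficient is correct (the unique solution of $\sum k_i = j/d$, $\sum i k_i = j$ is $k_d=j/d$, $k_{<d}=0$), but the unit claim is false: the Artin--Hasse coefficients are \emph{not} all $p$-adic units. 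For example, with $p=2$ one has $e_3=\tfrac{2}{3}$, which has $2$-adic valuation~$1$. So the individual diagonal-type factors entering your permutation sum need not be units, and the non-cancellation of the signed sum over residue-preserving permutations (which you explicitly flag as ``the main obstacle'' without resolving) cannot be settled the way you describe. This is a genuine gap, and the way the paper sidesteps it --- by importing purity of $L(\chi_1,s)$ rather than attempting the Dwork-matrix combinatorics head-on --- is essentially the point of the proposition.

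Two minor remarks. First, $u_j$ lies in $\ZZ_p\llbracket\pi^{1/d}\rrbracket$, not merely $\ZZ_p$ (the paper's wording here is loose), so one must be careful about what ``leading coefficient'' means when summing over permutations. Second, your separate argument that $a_1(T)\equiv-1\pmod T$ is unnecessary: the $n=0$ case of Part~(3) already pins $(0,0)$ and $(1,0)$; and in any case $M_{00}=u_0=E(\hat b_0\pi)$ has constant term $1$ but is not literally~$1$.
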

\begin{proof}
Part (1) follows from Theorem~\ref{T:HP bound} and the simple fact that the $T^{a(p-1)}$-adic Newton polygon of $C^*(T,s)$ lies below the $\pi_\chi^{a(p-1)}$-adic Newton polygon of $C^*(T,s)|_{T=\pi_\chi} = C^*(\chi,s)$.

For (2), we first recall that 
$$C^*(\chi_1,s) = L^*(\chi_1,s) L^*(\chi_1, qs) L^*(\chi_1, q^2s) \cdots.$$
It suffices to show that the $q$-adic Newton polygon of $L^*(\chi_1, s)$ lies below the polygon starting at $(0,0)$ and with a segment of slope $0$, and then $d-1$ segments of slope $\frac{1}{2}$.
But this is clear, as $L^*(\chi_1,s)$ is the product of the factor $1-\chi(\Tr_{\QQ_q/\QQ_p}(\hat f(0))) s$, which has slope zero, with the L-function $L(\chi_1,s)$, whose $q$-adic Newton polygon starts at $(0,0)$ and ends at $(d-1, \frac{d-1}{2})$.

Part (3) follows from the combination of (1) and (2) because the upper bound and lower bound of the $q$-adic Newton polygon of $C^*(\chi_1,s)$ agree on the line segments  connecting $\big(nd, \frac{n(nd-1)}{2}\big)$ and $\big(nd+1, \frac{n(nd-1)}{2}+n\big)$ for all $n \in \ZZ_{\geq 0}$.
Indeed, the upper bound $y$-coordinate corresponding to $x=nd$ in (2) is $n\frac{d-1}{2} + \frac{n(n-1)}{2}d = \frac{n(nd-1)}{2}$.
\end{proof}

\begin{remark}
Proposition~\ref{P:existence of turning point}(3), concerning the agreement of the Newton polygon upper and lower bounds, is the key point of this paper.  See Figure~\ref{upper lower figure} for an illustration of the upper and lower bounds in the special case $d = 4$.
\end{remark}

\begin{figure} 
\includegraphics[width=3in]{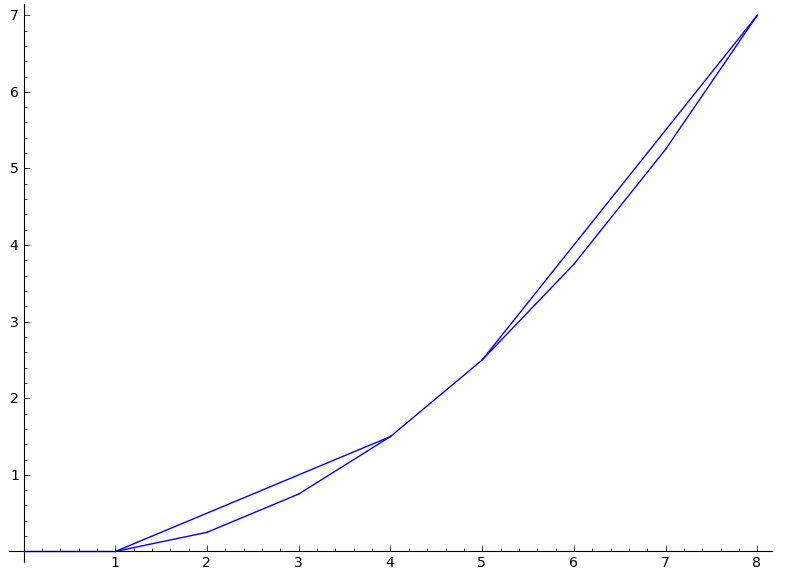}
\caption{The upper and lower bounds for the Newton polygon of the characteristic function over the interval $[0,8]$ for the special case $d = 4$.}\label{upper lower figure}
\end{figure}

\begin{proposition}
\label{P:ak units}
For an integer $k$ congruent to $0$ or $1$ modulo $d$, the leading term $a_{k,\lambda_k} T^{\lambda_k}$ of $a_k(T)$ satisfies $\lambda_k = \frac{k(k-1)a(p-1)}{2d}$ and $a_{k, \lambda_k} \in \ZZ_p^\times$.
\end{proposition}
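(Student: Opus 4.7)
The plan is to specialize the variable $T$ in $C^*(T, s)$ at $T = \pi_{\chi_1} = \zeta_p - 1$, where $\chi_1$ denotes a nontrivial character of $\ZZ_p/p\ZZ_p$, and to combine the sharp $q$-adic information on $C^*(\chi_1, s)$ provided by Proposition~\ref{P:existence of turning point}(3) with the $T$-adic lower bound of Theorem~\ref{T:HP bound}. The pivotal observation is that, since $e(\QQ_p(\zeta_p)/\QQ_p) = p - 1$ and $q = p^a$, one has $v_{\pi_{\chi_1}} = a(p-1)\, v_q$, so the $\pi_{\chi_1}^{a(p-1)}$-adic and $q$-adic Newton polygons of $C^*(\chi_1, s)$ coincide.

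For $k \equiv 0$ or $1 \pmod d$, Proposition~\ref{P:existence of turning point}(3) places $(k, k(k-1)/(2d))$ on the $q$-adic Newton polygon of $C^*(\chi_1, s)$ as a vertex, hence
\[
v_{\pi_{\chi_1}}(a_k(\pi_{\chi_1})) = a(p-1) \cdot \frac{k(k-1)}{2d}.
\]
On the other hand, substituting the $T$-adic expansion $a_k(T) = \sum_{i \geq 0} a_{k, \lambda_k + i} T^{\lambda_k + i}$ (with $a_{k, \lambda_k+i} \in \ZZ_p$) at $T = \pi_{\chi_1}$ and using that $\pi_{\chi_1}$ has positive $\pi_{\chi_1}$-adic valuation yields $v_{\pi_{\chi_1}}(a_k(\pi_{\chi_1})) \geq \lambda_k$. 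Combining with the bound $\lambda_k \geq a(p-1)\, k(k-1)/(2d)$ from Theorem~\ref{T:HP bound}, both inequalities in
\[
a(p-1) \cdot \frac{k(k-1)}{2d} = v_{\pi_{\chi_1}}(a_k(\pi_{\chi_1})) \geq \lambda_k \geq a(p-1) \cdot \frac{k(k-1)}{2d}
\]
collapse to equalities, giving the desired $\lambda_k = a(p-1)\, k(k-1)/(2d)$.

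To conclude that $a_{k, \lambda_k} \in \ZZ_p^\times$, I exploit the equality $v_{\pi_{\chi_1}}(a_k(\pi_{\chi_1})) = \lambda_k$. If instead $p \mid a_{k, \lambda_k}$, the leading term $a_{k, \lambda_k} \pi_{\chi_1}^{\lambda_k}$ contributes $v_{\pi_{\chi_1}} \geq \lambda_k + (p-1)$, while every subsequent term $a_{k, \lambda_k + i} \pi_{\chi_1}^{\lambda_k + i}$ with $i \geq 1$ contributes $v_{\pi_{\chi_1}} \geq \lambda_k + 1$. The non-Archimedean triangle inequality would then force $v_{\pi_{\chi_1}}(a_k(\pi_{\chi_1})) \geq \lambda_k + 1$, contradicting what was just established.

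The whole argument amounts to matching the two valuation normalizations on $\QQ_p\llbracket T \rrbracket$ and $\QQ_p(\zeta_p)$ and exploiting the sharp coincidence of upper and lower Newton polygon bounds at the prescribed vertices; no serious obstacle remains once Proposition~\ref{P:existence of turning point}(3) is in hand, since the content of the present statement is precisely the transfer of pointwise information at $T = \pi_{\chi_1}$ back to the $T$-adic expansion of the universal coefficients $a_k(T)$.
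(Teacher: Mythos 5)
Your proposal follows the paper's route: specialize at the order-$p$ character $\chi_1$ (so $\pi_{\chi_1} = \zeta_p - 1$ and $v_{\pi_{\chi_1}} = a(p-1)v_q$), read off the $q$-adic Newton polygon of $C^*(\chi_1,s)$ at the abscissae $k \equiv 0,1 \pmod d$, sandwich $\lambda_k$ between the Hodge bound of Theorem~\ref{T:HP bound} and the $\pi_{\chi_1}$-adic valuation of $a_k(\pi_{\chi_1})$, and deduce the unit claim by the ultrametric inequality.

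The one step that needs more justification is the assertion that Proposition~\ref{P:existence of turning point}(3) ``places $\big(k, \frac{k(k-1)}{2d}\big)$ on the $q$-adic Newton polygon \ldots\ as a vertex.'' That proposition literally only says the segment from $\big(nd, \frac{n(nd-1)}{2}\big)$ to $\big(nd+1, \frac{n(nd-1)}{2}+n\big)$ is contained in the polygon; it does not by itself say its endpoints are vertices. If they were not --- say the polygon continued with the same slope $n$ past these points --- then all you could conclude is $v_q\big(a_k(\pi_{\chi_1})\big) \geq \frac{k(k-1)}{2d}$, and the opening equality in your chain would degrade to an inequality that is too weak to pin down $\lambda_k$. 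The paper closes this gap by also invoking the lower bound of Proposition~\ref{P:existence of turning point}(1): the slope of the polygon segment arriving at $\big(nd, \frac{n(nd-1)}{2}\big)$ is at most $\frac{nd-1}{d} = n - \frac{1}{d}$, while the segment leaving $\big(nd+1, \frac{n(nd-1)}{2}+n\big)$ has slope at least $\frac{nd+1}{d} = n + \frac{1}{d}$, so both endpoints are genuine vertices, hence $v_q\big(a_k(\pi_{\chi_1})\big)$ equals the polygon's value at $x = k$. With this supplied, your chain of inequalities, the resulting value of $\lambda_k$, and the unit argument via $v_{\pi_{\chi_1}}(p) = p-1$ all go through exactly as you wrote.
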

\begin{proof}

From Proposition~\ref{P:existence of turning point}(3), we know the $q$-adic Newton polygon for $C^*(\chi_1, s)$ passes through the points $\left(nd,\frac{n(nd-1)}{2}\right)$ and $\left(nd+1, \frac{n(nd-1)}{2} + n\right)$. By considering the lower bound for the Newton polygon given by Proposition~\ref{P:existence of turning point}(1), the slope in the preceding segment must be less than or equal to 
$\frac{d-1}{d} + (n-1)$, 
and the slope in the following segment must be greater than or equal to $n + \frac{1}{d}$. 
This shows that these lattice points $\left(nd, \frac{n(nd-1)}{2}\right)$ and $\left(nd+1, \frac{n(nd-1)}{2} + n\right)$ must be 
vertices for both the $q$-adic Newton polygon of $C^*(\pi_{\chi_1}, s)$ and the $T^{a(p-1)}$-adic Newton polygon of $C^*(T,s)$. Hence we must have
\[
v_{\pi_{\chi_1}}(a_{nd}(\pi_{\chi_1})) = a(p-1)\frac{n(nd-1)}{2} = v_{T}(a_{nd}(T))\]
\[v_{\pi_{\chi_1}}(a_{nd+1}(\pi_{\chi_1})) = a(p-1)\left(\frac{n(nd-1)}{2}+ n\right) = v_{T}(a_{nd+1}(T)).  
\]
This shows that both $\lambda_{nd}$ and $\lambda_{nd+1}$ have the stated values and also shows that the leading coefficients of $a_{nd}(T)$ and $a_{nd+1}(T)$ are indeed $p$-adic units.
\end{proof}

\begin{corollary}
\label{C:strong upper bound}
For \emph{every} nontrivial finite  character $\chi:\ZZ_p \to \CC_p^\times$, the $\pi_\chi^{a(p-1)}$-adic Newton polygon of $C^*(\chi, s)$ contains the line segment connecting $\big(nd, \frac{n(nd-1)}{2}\big)$ and $\big(nd+1, \frac{n(nd-1)}{2}+n\big)$ for all $n \in \ZZ_{\geq 0}$.
Therefore, the $\pi_\chi^{a(p-1)}$-adic Newton polygon of $C^*(\chi,s)$ has the same upper bound polygon as described in Proposition~\ref{P:existence of turning point}(2).
\end{corollary}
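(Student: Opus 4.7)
The plan is to transfer the $T$-adic information about $C^*(T,s)$ encoded in Proposition~\ref{P:ak units} to its specializations $C^*(\chi,s) = C^*(T,s)|_{T=\pi_\chi}$ for \emph{arbitrary} nontrivial $\chi$, and then use convexity of the Newton polygon to extract the claimed upper bound.

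First I would fix $k \in \{nd, nd+1\}$ and invoke Proposition~\ref{P:ak units} to write
\[
a_k(T) = a_{k,\lambda_k} T^{\lambda_k} + a_{k,\lambda_k+1} T^{\lambda_k+1} + \cdots
\]
with $a_{k,\lambda_k} \in \ZZ_p^\times$ and $\lambda_k = \frac{k(k-1)a(p-1)}{2d}$. Since $\pi_\chi$ is a uniformizer of $\QQ_p(\pi_\chi)$ and the $p$-adic unit $a_{k,\lambda_k}$ remains a unit in the enlarged local ring $\calO_{\QQ_p(\pi_\chi)}$, specializing $T\mapsto \pi_\chi$ preserves the valuation of the leading term; a term-by-term application of the nonarchimedean triangle inequality gives
\[
v_{\pi_\chi^{a(p-1)}}\bigl(a_k(\pi_\chi)\bigr) = \frac{\lambda_k}{a(p-1)} = \frac{k(k-1)}{2d}.
\]
Evaluating at $k=nd$ and $k=nd+1$ yields precisely $\frac{n(nd-1)}{2}$ and $\frac{n(nd-1)}{2}+n$, so both lattice points lie on (not merely above) the $\pi_\chi^{a(p-1)}$-adic Newton polygon of $C^*(\chi,s)$. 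Because their abscissae are consecutive integers, the line segment between them is automatically a segment of the Newton polygon, yielding the first assertion of the corollary.

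For the ``therefore'' clause, I would observe that the upper bound polygon described in Proposition~\ref{P:existence of turning point}(2) is obtained by linearly interpolating between each adjacent pair of anchors $(nd+1, \frac{n(nd-1)}{2}+n)$ and $((n+1)d, \frac{(n+1)((n+1)d-1)}{2})$; a short direct computation shows that this interpolating line has slope $n+\frac{1}{2}$ (matching the $d-1$ equal slopes asserted in that proposition). Since the Newton polygon of $C^*(\chi,s)$ is convex and also passes through every such anchor, on each interval $[nd+1,(n+1)d]$ it must lie on or below the chord joining the endpoints, which is exactly the upper bound polygon. The width-one intervals $[nd,nd+1]$ coincide with their chords by the first assertion, completing the argument.

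I expect the main obstacle to be conceptual rather than computational: the critical point to recognize is that Proposition~\ref{P:ak units}, although extracted from the special character $\chi_1$ of order exactly $p$, produces a leading coefficient $a_{k,\lambda_k}$ living in $\ZZ_p$ \emph{independently of $\chi$}. It is precisely this independence that lets the valuation identity, and hence the vertex-forcing at $(nd,\cdot)$ and $(nd+1,\cdot)$, propagate uniformly to every nontrivial finite character; once one sees this, the remainder is a routine convexity check.
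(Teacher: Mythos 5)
Your argument follows the paper's proof essentially step for step: invoke Proposition~\ref{P:ak units} to identify the leading terms of $a_{nd}(T)$ and $a_{nd+1}(T)$ as having unit coefficient and exact $T$-degree $\lambda_k = \tfrac{k(k-1)a(p-1)}{2d}$, specialize $T \mapsto \pi_\chi$ to compute the valuations, conclude the polygon passes through those two points, and finish by convexity.

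One step is compressed in a way that hides the crux of the argument: from the equality $v_{\pi_\chi^{a(p-1)}}(a_k(\pi_\chi)) = \tfrac{k(k-1)}{2d}$ you assert that ``both lattice points lie on (not merely above) the Newton polygon.'' That inference is not automatic. Knowing the valuations of two particular coefficients only tells you where two particular data points sit; some other coefficient $a_j(\pi_\chi)$ could, a priori, dip low enough that the lower convex hull passes beneath these two points. What closes the gap is Proposition~\ref{P:existence of turning point}(1): it says the entire $\pi_\chi^{a(p-1)}$-adic Newton polygon of $C^*(\chi,s)$ lies on or above the polygon with vertices $(k, \tfrac{k(k-1)}{2d})$. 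Since the two points you computed sit exactly on this lower-bound polygon, the hull is forced to pass through them, and since their abscissae are consecutive integers the segment joining them is part of the Newton polygon. Your parenthetical ``(not merely above)'' and the phrase ``vertex-forcing'' in the closing paragraph suggest you have this in mind, but the explicit appeal to the lower bound of Proposition~\ref{P:existence of turning point}(1) needs to be made, as it is precisely what makes the specialization argument go through; this is how the paper itself states it. The rest of your proof, including the convexity argument for the ``therefore'' clause, is fine.
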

\begin{proof}
By Proposition~\ref{P:ak units}, for $k =nd$ and $nd+1$, we have
\[
a_k(T) = a_{k, \lambda_k}T^{\lambda_k} + \text{ terms with higher power in $T$},
\] where $\lambda_k = \frac{k(k-1)a(p-1)}{2d}$ and $a_{k, \lambda_k} \in \ZZ_p^\times$.
Thus, we have
\[
v_{\pi_\chi^{a(p-1)}}\big(a_k(\pi_\chi)\big) = \frac{k(k-1)a(p-1)}{2d} \big/ (a(p-1)) = \frac{k(k-1)}{2d}.
\]
Note that this agrees with the lower bound of the $\pi_\chi^{a(p-1)}$-adic Newton polygon of $C^*(\chi,s)$ given by Proposition~\ref{P:existence of turning point}(1).
So it forces the  $\pi_\chi^{a(p-1)}$-adic Newton polygon of $C^*(\chi,s)$ to contain the segment connecting $\big(nd, \frac{n(nd-1)}{2}\big)$ and $\big(nd+1, \frac{n(nd-1)}{2}+n\big)$.
The last statement follows from the convexity of Newton polygons.
\end{proof}

\begin{corollary}
\label{C:T-adic line segment}
The $T^{a(p-1)}$-adic Newton polygon of $C^*(T,s)$ contains a line segment connecting $\big(nd, \frac{n(nd-1)}{2}\big)$ and $\big(nd+1,\frac{n(nd-1)}{2}+n\big)$ for all $n \in \ZZ_{\geq 0}$.
\end{corollary}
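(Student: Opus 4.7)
The plan is to deduce this directly from Proposition~\ref{P:ak units} combined with the lower bound of Theorem~\ref{T:HP bound}. The strategy is that the leading-term information of Proposition~\ref{P:ak units} pins down the $T^{a(p-1)}$-adic valuation of $a_k(T)$ exactly at $k = nd$ and $k = nd+1$; these exact valuations already coincide with the Hodge lower bound, so the Newton polygon must pass through the two lattice points in question.

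Specifically, Proposition~\ref{P:ak units} asserts that for $k = nd$ and $k = nd+1$ the leading $T$-exponent of $a_k(T)$ equals $\lambda_k = \frac{k(k-1)a(p-1)}{2d}$ with unit leading coefficient. After dividing by $a(p-1)$ to convert to the $T^{a(p-1)}$-adic valuation, this yields
\[
v_{T^{a(p-1)}}\bigl(a_{nd}(T)\bigr) = \tfrac{n(nd-1)}{2}, \qquad v_{T^{a(p-1)}}\bigl(a_{nd+1}(T)\bigr) = \tfrac{n(nd-1)}{2} + n.
\]
Meanwhile Theorem~\ref{T:HP bound} gives the general bound $v_{T^{a(p-1)}}(a_k(T)) \geq \frac{k(k-1)}{2d}$, which specializes to exactly these same two values at $k = nd$ and at $k = nd+1$. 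Thus the upper and lower bounds agree at both points.

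Since the Newton polygon of $C^*(T,s)$ is the lower convex hull of the points $(k, v_{T^{a(p-1)}}(a_k(T)))$ and at the same time lies on or above the Hodge polygon of Theorem~\ref{T:HP bound}, this forced agreement implies that the polygon passes through both $(nd, \tfrac{n(nd-1)}{2})$ and $(nd+1, \tfrac{n(nd-1)}{2}+n)$. Because these are consecutive integer abscissae and a Newton polygon is piecewise linear with breakpoints at integer $x$-values, the polygon must coincide with the straight segment joining them on $[nd, nd+1]$, which is the claim. There is no real obstacle here: the substantive analytic input has already been absorbed into Proposition~\ref{P:ak units}, and what remains is a purely convex-geometric bookkeeping step of matching upper and lower bounds.
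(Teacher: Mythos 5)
Your proof is correct and follows essentially the same route as the paper: you invoke Proposition~\ref{P:ak units} to pin the $T^{a(p-1)}$-adic valuations of $a_{nd}(T)$ and $a_{nd+1}(T)$ exactly at the Hodge lower bound of Theorem~\ref{T:HP bound}, and then conclude by convexity (the paper states this in one line, while you spell out the piecewise-linear/consecutive-integer detail, which is fine).
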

\begin{proof}
Note that the lower bound of the $T^{a(p-1)}$-adic Newton polygon of $C^*(T,s)$ given by Theorem~\ref{T:HP bound} is achieved at the points $x = nd$ or $nd+1$, as shown in Proposition~\ref{P:ak units}.
The corollary follows by convexity.
\end{proof}

\begin{lemma} \label{L:maximum gap} For every nontrivial finite  character $\chi: \ZZ_p \to \CC_p^\times$, 
the maximum gap between the lower bound and the upper bound for the $\pi_\chi^{a(p-1)}$-adic Newton polygon of $C^*(\chi,s)$, given in Proposition~\ref{P:existence of turning point}(1) and Corollary~\ref{C:strong upper bound},  is $\frac{(d-1)^2}{8d}$.
\end{lemma}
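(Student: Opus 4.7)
The plan is to compute the vertical distance between the two piecewise-linear bounds explicitly on each of the intervals where they do not already agree, and then maximize. Recall that the lower bound from Proposition~\ref{P:existence of turning point}(1) passes through the lattice points $(k, \frac{k(k-1)}{2d})$ for every integer $k \geq 0$, while the upper bound from Corollary~\ref{C:strong upper bound} passes through $(nd, \frac{n(nd-1)}{2})$ and $(nd+1, \frac{n(nd-1)}{2}+n)$ with slope $n + \frac{1}{2}$ on the connecting segment $[nd+1, (n+1)d]$. The two polygons coincide at the points $(nd, \frac{n(nd-1)}{2})$ and $(nd+1, \frac{n(nd-1)}{2}+n)$, so the gap vanishes there and, by the convexity of both bounds, the only regions contributing positive gap are the interiors of the intervals $[nd+1, (n+1)d]$ for $n \in \ZZ_{\geq 0}$.

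On such an interval, I would substitute $x = nd + j$ for $j \in [1, d]$ and compute the difference. The upper-bound value at $x$ is $\frac{n(nd-1)}{2} + n + (n+\tfrac{1}{2})(j-1)$, while the lower-bound value is $\frac{(nd+j)(nd+j-1)}{2d}$. After a direct expansion all $n$-dependent terms cancel, and the difference simplifies to the clean expression
\[
U(nd+j) - L(nd+j) = \frac{(j-1)(d-j)}{2d},
\]
which is independent of $n$. Maximizing this concave-down quadratic over real $j \in [1,d]$ by elementary calculus locates the critical point at $j = (d+1)/2$ and gives the maximum value $\frac{((d-1)/2)^2}{2d} = \frac{(d-1)^2}{8d}$. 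Since both polygons are piecewise linear, this is the supremum of the gap over all $x \geq 0$, yielding the claim.

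There is no serious obstacle here: the computation is essentially bookkeeping. The one place to take care is in verifying that the $n$-dependence really drops out of the difference (as it must, since the statement of the lemma is uniform in $n$), so I would carry out the simplification cleanly and symmetrically in terms of $(j-1)$ and $(d-j)$ to make the cancellation and the eventual maximization transparent.
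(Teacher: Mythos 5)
Your proof is correct and follows the same strategy as the paper's: reduce to a single block of width $d$, compute the gap at $x = nd+j$ as the quadratic $\frac{(j-1)(d-j)}{2d}$ (the paper writes this as $g(i) = \frac{i-1}{2} - \frac{i(i-1)}{2d}$ on $[1,d]$), and maximize at $j = \frac{d+1}{2}$ to get $\frac{(d-1)^2}{8d}$. Your explicit verification that the $n$-dependence cancels is a slightly cleaner justification of the paper's terser claim that the later blocks are vertical translates of the first.
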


\begin{proof}
It suffices to consider the block above the $x$-axis interval $[1,d]$, because the upper and lower bounds in later intervals are obtained from the bounds in this interval by adding the same constant to both.  At the positive integer $x = i$ in the interval $[1,d]$, the gap is
$
g(i) = \frac{i-1}{2} - \frac{i(i-1)}{2d} = \frac{i(d+1) - d - i^2}{2d}.
$
The maximum of this function is 
$
g\left( \frac{d+1}{2} \right) = \frac{(d-1)^2}{8d}.
$
\end{proof}

\begin{theorem}
\label{T:NP independence}
For a finite nontrivial character $\chi: \ZZ_p \to \CC_p^\times$ with conductor $p^{m_\chi} \geq \frac{pa(d-1)^2}{8d}$, the $\pi_{\chi}^{a(p-1)}$-adic Newton polygon of $C^*(\chi,s)$ is independent of the character $\chi$.
\end{theorem}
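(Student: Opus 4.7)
The plan is to show that under the hypothesis, the $\pi_\chi^{a(p-1)}$-adic valuation $\mu_n(\chi) := v_{\pi_\chi^{a(p-1)}}(a_n(\pi_\chi))$ at each vertex of the Newton polygon of $C^*(\chi,s)$ depends only on the intrinsic structure of the power series $a_n(T) \in \ZZ_p\llbracket T\rrbracket$ and not on the specific character $\chi$. Once this is established, $\mathrm{NP}_\chi(C^*)$ equals the lower convex hull of $\chi$-independent data, proving the theorem.

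For each $n$ I would expand $a_n(T) = \sum_{i \geq \lambda_n} a_{n,i}T^i$ with $a_{n,i} \in \ZZ_p$ and introduce the integer
\[
\bar\lambda_n := \min\{i \geq \lambda_n : a_{n,i} \in \ZZ_p^\times\} \in \ZZ_{\geq 0} \cup \{\infty\},
\]
i.e.\ the $T$-adic valuation of $a_n(T) \bmod p$. Substituting $T = \pi_\chi$ and using $v_{\pi_\chi}(p) = (p-1)p^{m_\chi-1}$, the strong triangle inequality gives
\[
v_{\pi_\chi}(a_n(\pi_\chi)) \geq \min_{i \geq \lambda_n}\bigl(v_p(a_{n,i})(p-1)p^{m_\chi-1} + i\bigr),
\]
with equality when the minimum is attained uniquely. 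Any $i \in [\lambda_n, \bar\lambda_n)$ has $v_p(a_{n,i}) \geq 1$ and contributes at least $(p-1)p^{m_\chi-1} + \lambda_n$; the index $i = \bar\lambda_n$ contributes exactly $\bar\lambda_n$; indices $i > \bar\lambda_n$ contribute strictly more. Consequently, as soon as $(p-1)p^{m_\chi-1} > \bar\lambda_n - \lambda_n$ the minimum is uniquely $\bar\lambda_n$, yielding $\mu_n(\chi) = \bar\lambda_n/(a(p-1))$, a value independent of $\chi$.

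It remains to verify the gap inequality $(p-1)p^{m_\chi-1} > \bar\lambda_n - \lambda_n$ at every vertex of $\mathrm{NP}_\chi$. At such a vertex the point $(n,\mu_n(\chi))$ lies on $\mathrm{NP}_\chi$, which Proposition~\ref{P:existence of turning point}(1) and Corollary~\ref{C:strong upper bound} sandwich between the lower bound $n(n-1)/(2d)$ and the explicit upper bound; Lemma~\ref{L:maximum gap} then forces $\mu_n(\chi) - \nu_n \leq (d-1)^2/(8d)$. In the regime $\mu_n(\chi) = \bar\lambda_n/(a(p-1))$ this rearranges to $\bar\lambda_n - \lambda_n \leq a(p-1)(d-1)^2/(8d) \leq (p-1)p^{m_\chi-1}$, the last step being precisely the hypothesis. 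The main obstacle is the apparent circularity: invoking this bound already supposes the outcome of the strong triangle argument. I plan to close the circle by handling the complementary case directly: if $(p-1)p^{m_\chi-1} \leq \bar\lambda_n - \lambda_n$, the minimum is attained instead at some $i \in [\lambda_n, \bar\lambda_n)$ with $v_p(a_{n,i}) \geq 1$, so $\mu_n(\chi) \geq \nu_n + p^{m_\chi-1}/a \geq \mathrm{LB}(n) + (d-1)^2/(8d) \geq \mathrm{UB}(n)$; combined with the vertex upper bound $\mu_n(\chi) \leq \mathrm{UB}(n)$, this pins $\mu_n(\chi)$ to $\mathrm{UB}(n)$, again independent of $\chi$. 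In either alternative every vertex of $\mathrm{NP}_\chi$ has a $\chi$-independent $y$-coordinate, so the Newton polygon itself is independent of $\chi$.
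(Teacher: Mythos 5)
Your proof is essentially the same as the paper's: both introduce the first index $\bar\lambda_n$ (the paper's $\lambda'_n$) at which a coefficient of $a_n(T)$ is a $p$-adic unit, use the conductor hypothesis together with Lemma~\ref{L:maximum gap} to push every term $a_{n,j}\pi_\chi^{j}$ with $j<\bar\lambda_n$ up to the upper bound of Corollary~\ref{C:strong upper bound}, and conclude that $\mathrm{NP}_\chi$ is the lower convex hull of the upper-bound polygon together with the $\chi$-independent points $\bigl(n,\bar\lambda_n/(a(p-1))\bigr)$; the paper avoids your circularity worry by phrasing the key step as a single two-sided claim rather than a case split. One small imprecision to fix: in the complementary case it is not correct to say the minimal term-valuation occurs at some $i\in[\lambda_n,\bar\lambda_n)$ (the term at $i=\bar\lambda_n$ may still be smallest, or tie), but the lower bound you actually use, namely that every term has $\pi_\chi^{a(p-1)}$-valuation $\geq \lambda_n/(a(p-1))+p^{m_\chi-1}/a$, is correct and suffices.
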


\begin{proof}
By definition,  the $\pi_{\chi}^{a(p-1)}$-adic Newton polygon of $C^*(\chi,s)$ is the convex hull of points $\big(i, v_{\pi_\chi^{a(p-1)}}(a_i(\pi_\chi))\big)$ for all $i \geq 0$.
Since we have already given a (very strong) upper bound for this polygon in Corollary~\ref{C:strong upper bound}, we need only to consider those points which lies below this upper bound, which we will prove to be independent of the choice of $\chi$, provided that $p^{m_\chi }\geq \frac{pa(d-1)^2}{8d}$. We assume this inequality for the rest of the proof.

For each integer $i\geq 0$, write $a_i(T) = a_{i, \lambda_i}T^{\lambda_i} + a_{i, \lambda_i +1} T^{\lambda_i +1} + \cdots$ for $a_{i, j} \in \ZZ_p$ and $a_{i, \lambda_i} \neq 0$.
Then $\lambda_i \geq \frac{i(i-1)a(p-1)}{2d}$ by Theorem~\ref{T:HP bound}.
Let $\lambda'_i$ denote the minimal integer such that $a_{i, {\lambda'_i}}$ is a $p$-adic unit, or infinity if no such integer exists.
We claim the following:
\begin{itemize}
\item If either the point $\big(i, v_{\pi_\chi^{a(p-1)}}(a_i(\pi_\chi))\big)$ or the point $\big(i, \lambda'_i/a(p-1)\big)$ lies below the upper bound polygon  in Corollary~\ref{C:strong upper bound}, then $v_{\pi_\chi^{a(p-1)}}(a_i(\pi_\chi)) = \lambda'_i / a(p-1)$ (and the two points agree).
\end{itemize}
It would then follow that the $\pi_\chi^{a(p-1)}$-adic Newton polygon of $C^*(\chi, s)$ is the  lower convex hull of the upper bound polygon in Corollary~\ref{C:strong upper bound} and the points
\[
\big(i, \lambda'_i/a(p-1) \big) \textrm{ for each }i \geq 0.
\]
Note that the numbers $\lambda'_i$ are independent of $\chi$.  So the theorem follows from this claim.

We now prove the claim by studying $a_i(\pi_\chi) = a_{i, \lambda_i}\pi_\chi^{\lambda_i} + a_{i, \lambda_i +1} \pi_\chi^{\lambda_i +1} + \cdots$.
For $j \in [\lambda_i, \lambda'_i)$, the coefficient $a_{i,j}$ belongs to $\Zp$ and is divisible by $p$; so the valuation of the term is
\[
v_{\pi_\chi^{a(p-1)}}\big(a_{i,j} \pi_\chi^j \big) \geq  v_{\pi_\chi^{a(p-1)}}(p) + \frac{\lambda_i}{a(p-1)} = \frac{p^{m_\chi-1}}{a} + \frac{\lambda_i}{a(p-1)} \geq \frac{(d-1)^2}{8d} + \frac{\lambda_i}{a(p-1)},
\]
which corresponds to a point lying on or above the upper bound polygon by Lemma~\ref{L:maximum gap}.
In contrast, the term $a_{i,\lambda'_i} \pi_\chi^{\lambda'_i}$ has $\pi_\chi^{a(p-1)}$-valuation $\lambda'_i / a(p-1)$.
Therefore, if the point $\big(i, v_{\pi_\chi^{a(p-1)}}(a_i(\pi_\chi))\big)$ or $\big(i, \lambda'_i/a(p-1)\big)$ lies below the upper bound polygon, the $\pi_\chi^{a(p-1)}$-valuation of $a_i(\pi_\chi)$ must be just $\lambda'_i / a(p-1)$.  This proves the claim and hence the theorem.
\end{proof}

\begin{remark}
\label{R:same for non finite chars}
In fact, the same proof shows that, for any (not necessarily finite) character $\chi: \ZZ_p \to \CC_p^\times$ with $|\pi_\chi| = |\chi(1)-1| \geq p^{-8d/a(p-1)(d-1)^2}$, the $\pi_\chi^{a(p-1)}$-adic Newton polygon of $C^*(\chi,s)$ is independent of $\chi$.
\end{remark}

We are now ready to prove Theorem~\ref{T:main theorem}. 

\begin{proof}[Proof of Theorem{~\ref{T:main theorem}}]
Recall our setup: let $m_0$ be the minimal positive integer such that $p^{m_0-1} \geq \frac{a(d-1)^2}{8d}$ and let
$\chi_0: \ZZ_p \to \CC_p^\times$ be a finite character with $m_{\chi_0} = m_0$.
Let $0<\alpha_1, \dots, \alpha_{dp^{m_0-1}-1}<1$ denote the slopes of the $q$-adic Newton polygon of $L(\chi_0,s)$. 
Then $0, \alpha_1, \dots, \alpha_{dp^{m_0-1}-1}$ are the slopes of the $q$-adic Newton polygon of $L^*(\chi_0,s)$, and hence
\begin{equation}
\label{E:NP slopes for chi}
\bigcup_{i \geq 0} \big\{i,\alpha_1+i, \dots, \alpha_{dp^{m_0-1}-1} +i\big\}
\end{equation}
 are the slopes of the $q$-adic Newton polygon of $C^*(\chi_0,s)$. 
 Since $v(q)=a(p-1)p^{m_0-1}v(\pi_{\chi_0})$,  the slopes of the $\pi_{\chi_0}^{a(p-1)}$-adic Newton polygon of $C^*(\chi_0, s)$ are rescaled to 
 \begin{equation}
\label{E:NP slopes for chi_0}
\bigcup_{i \geq 0} \big\{p^{m_0-1}i, p^{m_0-1}(\alpha_1+i), \dots, p^{m_0-1}(\alpha_{dp^{m_0-1}-1} +i) \big\}. 
\end{equation}
Now given a finite  character $\chi: \ZZ_p \to \CC_p^\times$ with $m_\chi \geq m_0$,
Theorem~\ref{T:NP independence} says that the slopes of the  $\pi_\chi^{a(p-1)}$-adic Newton polygon of $C^*(\chi, s)$ are also given by \eqref{E:NP slopes for chi_0}.
Now $v(q)=a(p-1)p^{m-1}v(\pi_{\chi})$,  the slopes of the $q$-adic Newton polygon of $C^*(\chi, s)$ are rescaled to 
 \begin{equation}
\label{E:NP slopes for chi_2}
\bigcup_{i \geq 0} \big\{p^{m_0-m}i, p^{m_0-m}(\alpha_1+i), \dots, p^{m_0-m}(\alpha_{dp^{m_0-1}-1} +i) \big\}. 
\end{equation}
Using the relation
\[
L(\chi,s) = \frac{1}{1 - \chi(\Tr_{\QQ_q/\QQ_p}(\hat f(0)))  s} \cdot \frac{C^*(\chi,s)}{C^*(\chi, qs)},
\]
it is clear that the slopes of the $q$-adic Newton polygon of $L(\chi,s)$ are as described in Theorem~\ref{T:main theorem}.
\end{proof}

\begin{remark}
It would be interesting to know whether the slopes of the $T^{a(p-1)}$-adic Newton polygon of $C^*(T,s)$ satisfy a similar periodicity property, i.e., whether the slopes form a disjoint union of finite number of arithmetic progressions. This is known to be true if $p\equiv 1 \bmod d$ (see the references given in Example~\ref{ordinary example}), but open in general. 
\end{remark}

We give an application. Recall that the $T$-adic L-function $L(T,s)$ is a $T$-adic meromorphic (but in general not rational) function in $s$
over the field $\QQ_p((T))$. Thus, adjoining all the infinitely many zeros and poles of $L(T,s)$ to 
the field $\QQ_p((T))$ would generally  give an infinite extension. The following result shows this is not the case. 

\begin{theorem}
The splitting field $K_{p,d}$ over $\QQ_p((T))$ of all $T$-adic L-functions $L_f(T,s)$\footnote{Here, the subscript $f$ emphasizes the dependence of the L-function on the polynomial $f$.} for \emph{all} monic polynomials $f\in \overline {\FF}_p[x]$ of degree $d$  is a finite extension of $\QQ_p((T))$.
\end{theorem}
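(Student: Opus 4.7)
The strategy is to reduce the problem to the characteristic function $C^*_f(T,s)$, factor $C^*_f(T,s)$ by Newton-polygon slope into polynomial pieces of uniformly bounded degree, and then show that the resulting splitting fields lie in a common finite extension of $\QQ_p((T))$. Since $L_f(T,s) = C^*_f(T,s) / \bigl[C^*_f(T,qs) \cdot (1 - (1+T)^{\Tr_{\QQ_q/\QQ_p}(\hat f(0))}s)\bigr]$ with $q \in \QQ_p^\times$, the zeros and poles of $L_f(T,s)$ are, up to multiplication by a power of $q$, zeros of $C^*_f(T,s)$. Hence it suffices to bound the splitting field of $C^*_f(T,s)$ over $\QQ_p((T))$, uniformly in $f$.

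First, I would exploit the fine structure of the $T^{a(p-1)}$-adic Newton polygon of $C^*_f(T,s)$. By Corollary~\ref{C:T-adic line segment} (together with Proposition~\ref{P:ak units}), this polygon has the lattice points $(nd, n(nd-1)/2)$ and $(nd+1, n(nd-1)/2+n)$ as vertices for each $n \geq 0$, joined by a segment of slope $n$ and width $1$. Consequently every polygon segment has horizontal width at most $d-1$. Combining the Hodge lower bound of Theorem~\ref{T:HP bound} with the upper bound of Corollary~\ref{C:strong upper bound} shows that every slope in the block $[nd+1,(n+1)d]$ lies in the open interval $(n,n+1)$; these slope sets for different $n$ are disjoint from each other and from the integer transition slopes (each of the latter having multiplicity one). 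Hence every slope of the Newton polygon occurs with multiplicity at most $d-1$.

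Next, I would apply the $T$-adic Weierstrass preparation theorem iteratively at the polygon vertices to obtain a factorization $C^*_f(T,s) = \prod_\lambda P_{f,\lambda}(T,s)$ with $P_{f,\lambda}(T,s) \in \QQ_p((T))[s]$ of degree equal to the multiplicity of $\lambda$, hence at most $d-1$. Therefore each $P_{f,\lambda}$ has splitting field of degree at most $(d-1)!$ over $\QQ_p((T))$. To conclude, I would argue that only finitely many finite extensions of $\QQ_p((T))$ arise as splitting fields of such $P_{f,\lambda}$ as $f$ ranges over monic degree-$d$ polynomials in $\overline{\FF}_p[x]$ and $\lambda$ over all slopes; their compositum is then the desired finite extension $K_{p,d}$.

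The main obstacle is this last step. Since $\QQ_p((T))$ has the infinite residue field $\QQ_p$, a naive application of Krasner's lemma does not immediately bound the number of extensions of a given degree, so one must use additional structure of the $P_{f,\lambda}$. The plan is to show that the residue extension of each splitting field lies in a bounded finite unramified extension of $\QQ_p$ (using that the $P_{f,\lambda}$ arise from specialization of the characteristic series of the explicit Dwork-type operator whose matrix entries have controlled residue classes), and that the totally ramified part has ramification index dividing an integer depending only on $p$ and $d$. Combining these two facts produces a single finite extension of $\QQ_p((T))$ containing every splitting field, and that extension serves as $K_{p,d}$.
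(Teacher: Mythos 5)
Your reduction to $C^*_f(T,s)$ and the factorization step match the paper exactly: the paper also appeals to Corollary~\ref{C:T-adic line segment} to factor $C^*_f(T,s)$ into an infinite product of polynomials in $s$ of bounded degree (the paper states degree $\leq d$, your slightly sharper count of $\leq d-1$ from the disjointness of the slope blocks is fine but immaterial), and concludes that the splitting field is contained in the compositum $\widetilde K_{p,d}$ of the splitting fields of all irreducible polynomials over $\QQ_p((T))$ of that bounded degree. The divergence, and the genuine gap, is in your final step.

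You correctly observe that $\QQ_p((T))$ has the infinite residue field $\QQ_p$, so one cannot simply invoke compactness of $\calO_K$ plus Krasner to get finitely many extensions of a given degree in one stroke. But your proposed remedy --- extracting bounds on the residue extension from ``controlled residue classes'' of the Dwork-type operator entries --- is not developed, is not obviously uniform as $f$ ranges over $\overline{\FF}_p[x]$, and in fact is not needed. The issue is resolved by two purely field-theoretic observations that use nothing about the $P_{f,\lambda}$ beyond their degree. First, since the residue field $\QQ_p$ has characteristic zero, \emph{every} finite extension of $\QQ_p((T))$ is tamely ramified, and because tame inertia is procyclic the $T$-ramification index of the compositum $\widetilde K_{p,d}$ divides $d!$. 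Second, the residue extension of $\widetilde K_{p,d}$ over $\QQ_p((T))$ is a compositum of finite Galois extensions of $\QQ_p$ of degree $\leq d!$, and --- because $\QQ_p$ is a $p$-adic local field, not merely ``infinite'' --- there are only finitely many such extensions (the paper derives this from solvability of local Galois groups together with local class field theory; one could equally cite the classical finiteness of extensions of $\QQ_p$ of bounded degree). Bounded ramification index plus finite residue extension yields finiteness of $\widetilde K_{p,d}/\QQ_p((T))$, hence of $K_{p,d}$. So the structure you sketch at the end --- bound the unramified part and bound the ramification index --- is the right shape, but the mechanism you propose for the unramified part is a red herring; replace it with the general local-field finiteness of extensions of $\QQ_p$, and supply the tameness argument you omitted for the ramification bound.
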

\begin{proof}
It suffices to prove that the splitting field $K'_{p,d}$ over $\QQ_p((T))$ of all $T$-adic characteristic functions $C^*(T,s)$ 
for all monic polynomials $f\in \overline{\FF}_p[x]$ of degree $d$  is a finite extension of $\QQ_p((T))$.
By Corollary~\ref{C:T-adic line segment}, 
the power series $C^*(T,s)$ factors as an infinite product of polynomials (in the variable $s$) of degree $\leq d$. So
the splitting field $K'_{p,d}$ is contained in the extension $\widetilde  K_{p,d}$ of $\QQ_p((T))$ given by adjoining zeros of all irreducible polynomials of degree $\leq d$.  But this is a finite extension of $\Qp((T))$ as we prove now.

First, since $\Qp$ has characteristic zero, all extensions of $\Qp((T))$ are tamely ramified. If we only adjoin zeros of irreducible polynomials of degree $\leq d$, then $\widetilde K_{p,d}$ is an extension of $\Qp((T))$ with $T$-ramification degree $\leq d!$.
It then suffices to bound the residual field extension, namely, to prove that the composite of all finite Galois extensions of $\Qp$ of degree $\leq d!$ is still a finite extension.  In fact, we will show that there are only finitely many Galois extensions of $\Qp$ with degree $\leq d!$.
For this, we first notice that there are only finitely many choices of Galois groups with order $\leq d!$, which are all solvable.  So we just need to prove that there are only finitely many \emph{abelian} extensions at each step.  But this is clear from local class field theory.
\end{proof}
\begin{remark}
One may be able to give a more precise bound on the extension degree of $K_{p,d}$ over $\QQ_p((T))$.  We leave this to interested readers.
\end{remark}

\begin{remark} 
The above result proves a strong form of the $T$-adic Riemann hypothesis for the $T$-adic L-function in \cite{fu-wan} in the sense of Goss \cite{Goss}, see 
\cite{wan1} and \cite{Sheats} for evidence for Goss's original conjecture for his characteristic $p$ zeta functions.  For Dwork's unit root zeta function 
which is known to be a $p$-adic meromorphic function, the corresponding $p$-adic Riemann hypothesis is essentially completely open, even for the weaker version about the finiteness of 
the ramification of the splitting field over $\QQ_p$; see Conjecture 1.3 in \cite{wan2}.  For the characteristic power series 
of the $U_p$-operator acting on the $p$-adic Banach space of overconvergent $p$-adic modular forms of a given level and weight, the finiteness of the ramification of  the splitting field over $\QQ_p$ is also unknown; see Conjecture 6.1 in \cite{wan4}. An example of the $p$-adic Riemann hypothesis for 
zeta functions of divisors is given in \cite{wh}. 
\end{remark}

\begin{remark} 
Another natural problem is to study the possible simplicity of the zeros of the $T$-adic characteristic series $C^*(T,s)$. 
The simplicity is known in the case $p\equiv 1\pmod d$. It would be interesting to know if simplicity remains true in general.
\end{remark}

A similar proof gives the following more classical application. 
\begin{theorem}
Let $E_{p,d}(m)$ be the splitting field over $\QQ_p$ of all zeta functions $Z(C_m,s)$ for \emph{all} monic polynomials $f\in \overline{\FF}_p[x]$ of degree $d$. 
Then there is an explicit constant $B_d$ depending only on $d$ such that for all $m\geq 1$, we have 
$$[E_{p,d}(m): \QQ_p] \leq B_d p^{m-1}.$$
Furthermore, the inequality is an equality with $B_d = d$ if $p \equiv 1 \pmod d$. 
\end{theorem}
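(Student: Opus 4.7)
The plan is to adapt the proof of the previous theorem by specialization at $T = \pi_\chi$. I would first decompose
\[
Z(C_m,s) = \frac{1}{1-qs}\prod_{k=1}^{m}\prod_{m_\chi=k} L(\chi,s),
\]
so that, as $f$ ranges over monic polynomials of degree $d$, $E_{p,d}(m)$ is generated over $\QQ_p$ by the zeros of the L-functions $L(\chi,s)$ for $1 \leq m_\chi \leq m$. Since $L(\chi,s) = L(T,s)|_{T = \pi_\chi}$, and since by the previous theorem every zero of $L(T,s)$ already lies in the fixed finite extension $K_{p,d}$ of $\QQ_p((T))$, the idea is to control $E_{p,d}(m)$ by tracking how $K_{p,d}$ behaves under $T \mapsto \pi_\chi$.

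Concretely, I would write $K_{p,d} = L((\pi))$, where $L/\QQ_p$ is the finite residue field of $K_{p,d}$ with respect to the $T$-adic valuation, and $\pi$ is a uniformizer satisfying $\pi^e = u\, T$ for some unit $u$ and some $e \leq d!$ (using the tameness argument from the previous proof). After choosing an embedding into $\overline{\QQ_p}$, the specialization $T \mapsto \pi_\chi$ sends each zero $\alpha \in K_{p,d}$ of $L(T,s)$ to a zero $\alpha(\pi_\chi)$ of $L(\chi,s)$ lying in a finite extension of $\QQ_p(\pi_\chi) = \QQ_p(\zeta_{p^{m_\chi}})$ of degree at most $e\cdot [L:\QQ_p]$. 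Taking compositum over all $\chi$ with $m_\chi \leq m$ and all $f$, I would obtain
\[
[E_{p,d}(m) : \QQ_p(\zeta_{p^m})] \leq e\cdot [L:\QQ_p],
\]
and hence $[E_{p,d}(m) : \QQ_p] \leq (p-1)\, e\, [L:\QQ_p]\, p^{m-1}$.

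The main obstacle is to ensure that the constant $B_d := (p-1)\, e\, [L:\QQ_p]$ can be chosen depending only on $d$. The factor $(p-1)$ is the delicate one, as naively it comes from the full cyclotomic extension $\QQ_p(\zeta_{p^m})$. To handle this, I would argue that the zeros of the $L(\chi,s)$ do not in general generate all of $\QQ_p(\zeta_{p^m})$: the coefficients of the degree-$\leq d$ factors of $C^*(T,s)$, upon specializing $T = \pi_\chi$, should take values in a relatively small subfield of $\QQ_p(\zeta_{p^{m_\chi}})$ whose degree over $\QQ_p$ is bounded independently of $p$. Combining this with the finiteness of the set of extensions of $\QQ_p$ of degree $\leq d!$ invoked in the previous proof should yield a uniform $B_d$.

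For the equality case $p \equiv 1 \pmod d$, I would invoke Example~\ref{ordinary example}: each $L(\chi,s)$ is ordinary with $q$-adic Newton slopes $\{i/(dp^{m_\chi-1}) : 0 < i < dp^{m_\chi-1}\}$, each of multiplicity one. The inverse roots therefore take the explicit form (root of unity)$\,\cdot\, q^{i/(dp^{m-1})}$, and a short direct computation shows that together they generate the totally ramified extension $\QQ_p(q^{1/(dp^{m-1})})$ of $\QQ_p$ (up to a possibly mild unramified twist bounded by $d$), yielding $[E_{p,d}(m):\QQ_p] = dp^{m-1}$ and hence $B_d = d$.
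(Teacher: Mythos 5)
The overall framework you adopt (decompose $Z(C_m,s)$ into $L(\chi,s)$'s, then control their splitting fields via the degree-$\leq d$ factorization coming from the $T$-adic theory) is in the right spirit, since the paper states only that ``a similar proof gives the following.'' However, there are two genuine gaps, both of which you partly flag but do not resolve.

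First, specializing elements of $K_{p,d}$ at $T=\pi_\chi$ is not well-defined: $K_{p,d}$ is a finite extension of the Laurent series field $\QQ_p((T))$, and a Laurent series does not converge at a nonzero radius. The object that \emph{can} be specialized is not a root $\alpha(T)\in K_{p,d}$ but the polynomial factor $P_i(s)\in\ZZ_p\llbracket T\rrbracket[s]$ from Theorem~\ref{T:eigencurve theorem}(1): $P_i(\pi_\chi,s)\in\ZZ_p[\pi_\chi][s]$ has degree $d$ over $\QQ_p(\zeta_{p^{m_\chi}})$, and the tame/finiteness argument of the preceding theorem is then applied over $\QQ_p(\zeta_{p^{m_\chi}})$, not over $\QQ_p((T))$. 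Your route through ``$K_{p,d}=L((\pi))$, $\pi^e = uT$'' cannot be made rigorous as stated.

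Second, and more seriously, the naive bound one gets from this route is $[E_{p,d}(m)\cdot\QQ_p(\zeta_{p^m}):\QQ_p]\leq C_d\,(p-1)p^{m-1}$, and the entire content of the theorem is that the factor $(p-1)$ can be removed, giving a constant depending only on $d$. You correctly identify this as ``the delicate one,'' but your proposed fix --- that the coefficients $b_{i,j}(\pi_\chi)$ should lie in a subfield of $\QQ_p(\zeta_{p^{m_\chi}})$ of degree bounded independently of $p$ --- is asserted, not argued, and is not obviously true: these are generic power series in $\pi_\chi$ over $\ZZ_p$ and a priori generate all of $\QQ_p(\zeta_{p^{m_\chi}})$. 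What actually removes the $(p-1)$ is a more arithmetic input: the conjugates $L(\chi^\sigma,s)$ for $\sigma\in\Gal(\QQ_p(\zeta_{p^m})/\QQ_p)$ have roots differing by multiplication by roots of unity lying in a bounded extension of $\QQ_p$, so the compositum of the root fields over all $\chi$ of a given conductor does not pick up the full $(p-1)p^{m-1}$ coming from $\QQ_p(\zeta_{p^m})$. This needs to be established, not just hoped for. Likewise, in the equality case $p\equiv 1\pmod d$, your claim that the inverse roots have the explicit form (root of unity)$\cdot q^{i/(dp^{m-1})}$ is exactly the kind of statement that requires proof (e.g.\ via Gauss/Jacobi sum identities and Stickelberger in the Hodge-equals-Newton setting); Weil's Riemann hypothesis and ordinarity alone do not yield it, and the ``short direct computation'' is left entirely to the reader.
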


\section{Eigencurves for Artin-Schreier-Witt towers} \label{Sec:eigencurve}

For the Igusa tower over the ordinary locus of the modular curves, Coleman and Mazur \cite{coleman-mazur} studied a certain analogous $T$-adic characteristic function; from this, they defined  an eigencurve parametrizing the zeros of the characteristic function.
This eigencurve has many applications in number theory.

One of the striking results about the Coleman-Mazur eigencurve is its nice behavior near the boundary of the weight space, as shown by K. Buzzard and L. Kilford in \cite{buzzard-kilford} when $p=2$.
Unfortunately, such results are only known for very small prime numbers $p$.\footnote{Building on some key techniques developed in this paper, R. Liu, J. Zhang and the second and the third author recently proved a large range of cases for Coleman-Mazur eigencurves; see \cite{liu-wan-xiao} and 
\cite{xiao-zhang}.}

In this section, we study the analogous construction for the Artin-Schreier-Witt tower of curves, and we prove strong geometric properties of the analogous eigencurve near the boundary of the weight space.

\begin{definition}
Let $\calW$ denote the the rigid analytic open unit disc associated to $\ZZ_p\llbracket T \rrbracket$.
The \emph{eigencurve} $\calC_f$ associated to the Artin-Schreier-Witt tower for $f(x)$ is defined to be the zero locus of $C^*(T,s)$, viewed as a rigid analytic subspace of $\calW \times \GG_{m, \rig}$, where $s$ is the coordinate of the second factor. 
Denote the natural projection to the first factor by $\mathrm{wt}: \calC_f \to \calW$; and denote the \emph{inverse} of the natural projection to the second factor by 
\[
\boldsymbol \alpha: \calC_f \xrightarrow{\mathrm{pr}_2} \GG_{m, \rig} \xrightarrow{x \mapsto x^{-1}} \GG_{m, \rig}.
\]
For a closed point $w$ on $\calW$,
we use $v_{\calW}(w)$ to denote the $p$-adic valuation of the $T$-coordinate  of $w$.  Similarly, for a closed point $z \in \GG_{m, \rig}$, we use $v_{\GG_m}(z)$ to denote the $p$-adic valuation of the $s$-coordinate of $z$. 

Note that, by Theorem~\ref{T:HP bound} and Proposition~\ref{P:ak units}, $C^*(0,s) = 1+ a_1(0)s$ with $a_1(0)$ a $p$-adic unit. So over the point $T=0$ of $\calW$, $\calC_f$ has only one point and most components of $\calC_f$ blow up as $T$ approaches to $0$. This is slightly different from the case of usual Coleman-Mazur eigencurve.
For this reason, we put $\calW^\circ = \calW\backslash \{0\}$ as a rigid space, and $\calC_f^\circ: = \wt^{-1}(\calW^\circ)$.
\end{definition}

\begin{theorem}
\label{T:eigencurve theorem}
The following properties hold for the eigencurve $\calC_f$.
\begin{itemize}
\item[(1)]
The formal power series
$C^*(T,s)$ can be written as an infinite product  $\prod_{i=0}^\infty P_i(s)$, where each polynomial $P_i(s) = 1 + b_{i,1}(T)s + \cdots + b_{i,d}(T)s^d$ belongs to $\ZZ_p\llbracket T\rrbracket[s]$, whose $T^{a(p-1)}$-adic Newton polygon accounts for the segment between $x \in [(i-1)d, id-1]$ of the $T^{a(p-1)}$-adic Newton polygon of $C^*(T,s)$, and the leading term of $b_{i,d}(T)$ has coefficients in $\ZZ_p^\times$.

\item[(2)]  The eigencurve $\calC_f^\circ$ is an infinite disjoint union $\coprod_{i \geq 0} \calC_{f,i}^\circ$, where 
each $\calC_{f,i}^\circ$ is the zero locus of the polynomial $P_i(s)$ and it is
 a finite and flat cover of $\calW^\circ$ of degree $d$. 

\item[(3)]
Put $r = p^{-8d/a(p-1)(d-1)^2}$.
Let $\calW^{\geq r}$ denote the annulus inside $\calW$ where $|T| \geq r$.
Then there exist an integer $l \in \NN$ and (distinct) rational numbers $\beta_1, \dots, \beta_l \in [0,1)$ such that each $\calC^\circ_{f, i} \times_\calW \calW^{\geq r}$ is a disjoint union $\coprod_{j=1}^l \calC_{f, i}^{\circ,(j)}$ of closed subspaces of $\calC^\circ_{f,i}$, each being finite and flat over $\calW^{\geq r}$, and is characterized by the following property:
\[
\forall z \in \calC_{f,i}^{\circ,(j)},\quad v_{\GG_m}\big(\boldsymbol \alpha(z)\big) = ap^{m_0-1}(p-1)(\beta_j+i) v_{\calW}\big(\wt(z)\big).
\]
\end{itemize}
\end{theorem}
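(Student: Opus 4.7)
The plan is to establish (1), (2), and (3) in sequence, with Newton polygon factorization (Weierstrass preparation) over $\ZZ_p\llbracket T\rrbracket$ as the unifying tool.

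For (1), I would apply Newton-polygon factorization to $C^*(T,s) = 1 + \sum_{n \geq 1} a_n(T) s^n$ over the complete Noetherian ring $\ZZ_p\llbracket T \rrbracket$. Corollary~\ref{C:T-adic line segment} provides the needed vertices at $x = nd$ (and $nd+1$) of the $T^{a(p-1)}$-adic Newton polygon, while Proposition~\ref{P:ak units} guarantees that $a_{nd}(T)$ has a unit leading $T$-coefficient, so each slicing produces an honest polynomial factor in $\ZZ_p\llbracket T\rrbracket[s]$ rather than merely a power series. Iterating this slicing yields the factorization $C^*(T,s) = \prod_{i \geq 0} P_i(s)$ with $\deg_s P_i = d$, the $i$-th factor capturing a length-$d$ block of the Newton polygon of $C^*(T,s)$; convergence of the product in the $T$-adic topology is automatic from the linear growth of slopes, and the leading term of $b_{i,d}(T)$ inherits its unit leading coefficient from that of $a_{(i+1)d}(T)$.

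For (2), the zero locus $\calC_{f,i}$ of $P_i$ is a closed rigid analytic subspace of $\calW \times \GG_{m,\rig}$ cut out by a degree-$d$ polynomial in $s$. Since the leading coefficient decomposes as $b_{i,d}(T) = T^{\lambda_i} u_i(T)$ with $u_i \in \ZZ_p\llbracket T\rrbracket^\times$, the projection $\calC_{f,i} \to \calW$ is finite and flat of degree $d$. Disjointness of the union $\calC_f = \coprod_i \calC_{f,i}$ follows from the observation that the $s$-valuations of roots of distinct $P_i$'s at any fixed $T$ lie in disjoint intervals determined by the non-overlapping Newton polygon blocks, so no point in $\calW \times \GG_{m,\rig}$ can satisfy two different $P_i$'s simultaneously.

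For (3), the essential input is the slope constancy supplied by Theorem~\ref{T:NP independence} together with Remark~\ref{R:same for non finite chars}: for every closed point $T_0 \in \calW^{\geq r}(\CC_p)$, the $T_0^{a(p-1)}$-adic Newton polygon of $C^*(T_0,s)$ has the same list of slopes, which by the proof of Theorem~\ref{T:main theorem} equals $\{p^{m_0-1}(\alpha_k+n): 0 \leq k \leq dp^{m_0-1}-1,\, n \geq 0\}$ with $\alpha_0 := 0$. Re-expressing the slopes in block $P_i$ in the form $p^{m_0-1}(\beta_j + i)$ and collecting the distinct values gives the finite list $\beta_1, \ldots, \beta_l \in [0,1)$. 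Applying Weierstrass preparation to $P_i$ over the affinoid algebra of $\calW^{\geq r}$, separated according to these constant slope values, yields the decomposition $\calC_{f,i} \times_\calW \calW^{\geq r} = \coprod_{j=1}^l \calC_{f,i}^{(j)}$ with each piece finite flat over $\calW^{\geq r}$; the valuation identity then unwinds directly from the definition of Newton slope via $v_p(1/s) = \sigma \cdot a(p-1)\, v_p(T)$ with $\sigma = p^{m_0-1}(\beta_j+i)$. The main obstacle is upgrading pointwise slope data at each $T_0$ to a global factorization over the annulus; this is precisely where the global slope-constancy of Theorem~\ref{T:NP independence} is indispensable, since otherwise slopes could cross as $T$ varies and preclude clean Weierstrass separation.
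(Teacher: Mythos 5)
Your proposal follows essentially the same route as the paper: part (1) via Newton-polygon factorization driven by Corollary~\ref{C:T-adic line segment} and Proposition~\ref{P:ak units}, part (2) via disjointness of the per-character slope ranges of the $P_i$, and part (3) via the slope constancy over $\calW^{\geq r}$ coming from Theorem~\ref{T:NP independence} and Remark~\ref{R:same for non finite chars}. The only cosmetic difference is that you phrase the decomposition in (3) as a Weierstrass-preparation factorization of each $P_i$ by constant slope, where the paper phrases it as covering $\calC_{f,i}\times_\calW \calW^{\geq r}$ by the affinoid subdomains cut out by the valuation identity; these are the same argument.
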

\begin{proof}
The decomposition in (1) follows from the basic fact on the relation between Newton polygons and factorizations, in light of Corollary~\ref{C:T-adic line segment}. Moreover, by Proposition~\ref{P:ak units}, all $b_{i,j}(T)$ has coefficients in $\ZZ_p$ and the leading term of $b_{i,d}(T)$ is a $p$-adic unit.

Having the factorization at hand, it is clear that $\calC_f^\circ$ is the union of the zero loci of the $P_i(s)$'s, which are closed analytic subspaces $\calC^\circ_{f,i}$ of $\calW^\circ \times \GG_{m, \rig}$.
Moreover, since for any character $\chi: \ZZ_p \to \CC_p^\times$ with $\pi_\chi:=\chi(1)-1$, the slopes of the $\pi_\chi^{a(p-1)}$-adic Newton polygons of  $P_i(s)|_{T = \pi_\chi}$
sits in $[i-1, i)$.  So the zeros of $P_i(s)|_{T=\pi_\chi}$ are distinct for different $i$.
This implies that all subspaces $\calC^\circ_{f, i}$ are disjoint and concludes the proof of (2).

For (3), let $\chi_0: \ZZ_p \to \CC_p^\times$ be a finite  character with conductor $ \geq p \frac{a(d-1)^2}{8d}$.
Let $0, \alpha_1, \dots ,\alpha_{dp^{m_0-1}-1}$ be the $q$-adic slopes in $ L^*(\chi_0,s)$ (counted with multiplicities). These are rational numbers in the 
interval $[0,1)$. 
Each point $w \in \calW^\circ$ gives rise to a  (not necessarily finite) character $\chi_w: \Zp \to \CC_p^\times$; put $\pi_{\chi_w} = \chi_w(1)-1$.
By Remark~\ref{R:same for non finite chars},  the slopes of the $\pi_{\chi_w}^{a(p-1)}$-adic Newton polygon of $C^*(T,s)|_{T = \pi_{\chi_w}}$ are exactly 
\begin{equation}
\label{E:slopes for C at w}
\bigcup_{i \in \ZZ_{\geq 0}}
\big\{
p^{m_0-1}i, p^{m_0-1}(\alpha_1 + i), \dots, p^{m_0-1}(\alpha_{dp^{m_0-1}-1} + i)
\big\}.
\end{equation}
Then the slopes of the $p$-adic Newton polygon of $C^*(T,s)|_{T=\pi_{\chi_w}}$ should be given by the numbers in  \eqref{E:slopes for C at w} times the normalizing factor 
$
a(p-1)v_p(\pi_{\chi_w}) = a(p-1)v_\calW(w).
$

Let $0=\beta_1< \cdots < \beta_l<1$ be the slopes $0,\alpha_1, \dots, \alpha_{dp^{m_0-1}-1}$ with repeated numbers removed.  
Then the slope information above implies that the intersections $\calC_{f,i}^{\circ,(j)}$ of $\calC^\circ_{f,i}$ with the subdomain
\[
\big\{ z \in \calW^{\geq r} \times \GG_{m, \rig}\;
\big|\;
v_{\GG_m}\big(\boldsymbol \alpha(z)\big) = ap^{m_0-1}(p-1)(\beta_j+i) v_{\calW}\big(\wt(z)\big)
\big\}
\]
form a finite cover of $\calC^\circ_{f,i} \times_{\calW^\circ} \calW^{\geq r}$ by affinoid subdomains.
Thus, the union  
$$\calC^\circ_{f,i} \times_{\calW^\circ }\calW^{\geq r} =\bigsqcup_{j=1}^l \calC_{f,i}^{\circ,(j)}$$ 
is a disjoint union.
Each  $\calC_{f,i}^{\circ,(j)}$ is finite and flat over $\calW^{\geq r}$ because its fiber over every point of $\calW^{\geq r}$ is exactly the multiplicity of $\beta_j$ in the collection of $\alpha$'s above.
The assertions in (3) are now proved.
\end{proof}

\begin{remark}
The analogous statement of Theorem~\ref{T:eigencurve theorem}(2) for Coleman-Mazur eigencurve is probably too strong to be true.  But it is generally believed that, at least under certain conditions, the analogous statement of Theorem~\ref{T:eigencurve theorem}(3) for Coleman-Mazur eigencurve holds (as suggested by Buzzard-Kilford \cite{buzzard-kilford}). 
\end{remark}


\begin{thebibliography}{9999}
\bibitem[BK]{buzzard-kilford}
K. Buzzard and L. Kilford, 
The 2-adic eigencurve at the boundary of weight space,
{\it Compos. Math.} {\bf 141} (2005), no. 3, 605--619. 

\bibitem[CM]{coleman-mazur}
R. Coleman and B. Mazur,
The eigencurve, in {\it Galois representations in arithmetic algebraic geometry (Durham, 1996)}, 1--113, 
{\it London Math. Soc. Lecture Note Ser.}, {\bf 254}, Cambridge Univ. Press, Cambridge, 1998. 

\bibitem[Go]{Goss}
D. Goss, A Riemann hypothesis for characteristic $p$ L-functions,  
{\it J. Number Theory}  {\bf 82} (2000), no. 2, 299--322. 


\bibitem[HW]{wh} C. Haessig and D. Wan, 
On the $p$-adic Riemann hypothesis for the zeta function of divisors, 
{\it J. Number Theory}  {\bf 104} (2004), no. 2, 335--352.

\bibitem[LWan]{fu-wan}
C.  Liu and D. Wan,
$T$-adic exponential sums over finite fields, {\it Algebra Number Theory} {\bf3} (2009), no. 5, 489--509. 


\bibitem[LWei]{liu-wei} C. Liu and D. Wei, 
The L-functions of Witt coverings, {\it Math. Z.}  {\bf 255} (2007), 95--115. 

\bibitem[LWX]{liu-wan-xiao}
R. Liu, D. Wan, and L. Xiao,
Slopes of eigencurves over the boundary of the weight space, {\tt arXiv:1412.2584}.

\bibitem[Sh]{Sheats}   
 J. Sheats, The Riemann hypothesis for the Goss zeta function for ${\FF_q}[T]$, 
 {\it J. Number Theory} {\bf 71} (1998), no. 1, 121--157. 

\bibitem[W1]{wan1}
D. Wan, On the Riemann hypothesis for the characteristic $p$ zeta function, 
{\it J. Number Theory} {\bf 58} (1996), no. 1, 196--212.

\bibitem[W2]{wan4}
D. Wan, Dimension variation of classical and $p$-adic modular forms, 
{\it Invent. Math.} {\bf 133} (1998), no. 2, 449--463. 

\bibitem[W3]{wan2}
D. Wan, Dwork's conjecture on unit root zeta functions, 
{\it Ann.  Math.} {\bf 150} (1999), no. 3, 867--927. 

\bibitem[W4]{wan3}
D. Wan, Variation of $p$-adic Newton polygons for L-functions of exponential sums, 
{\it Asian J. Math.} {\bf  8} (2004), no. 3, 427--471.

 
\bibitem[WXZ]{xiao-zhang}
D. Wan, L. Xiao and J. Zhang,
Slopes of eigencurves over boundary disks, {\tt  arXiv:1407.0279}.


\end{thebibliography}
\end{document}